\documentclass[reqno,centertags,12pt]{amsart}

\usepackage{amssymb,amsmath,amsthm,dsfont}

\newcommand{\R}{{\mathbb R}}
\newcommand{\C}{{\mathbb C}}
\newcommand{\Z}{{\mathbb Z}}
\newcommand{\N}{{\mathbb N}}
\newcommand{\Zz}{{\mathcal Z}}
\newcommand{\Tt}{{\mathfrak t}}
\newcommand{\F}{{\mathcal F}}
\newcommand{\Ff}{{\mathbf F}}
\newcommand{\ttau}{{\boldsymbol\tau}}
\newcommand{\Ll}{{\boldsymbol\lambda}}
\newcommand{\oo}{{\boldsymbol\omega}}
\newcommand{\OO}{{\boldsymbol\Omega}}

\theoremstyle{plain} \newtheorem{theorem}{Theorem}[section]
\newtheorem{lemma}[theorem]{Lemma}
\newtheorem{coro}[theorem]{Corollary}
\newtheorem{prop}[theorem]{Proposition}

\theoremstyle{definition} \newtheorem{definition}[theorem]{Definition}
\theoremstyle{remark} \newtheorem{remark}{Remark}

\newcommand{\secref}[1]{Section~\ref{#1}}
\newcommand{\thmref}[1]{Theorem~\ref{#1}}
\newcommand{\lemref}[1]{Lemma~\ref{#1}}
\newcommand{\corref}[1]{Corollary~\ref{#1}}
\newcommand{\propref}[1]{Proposition~\ref{#1}}
\newcommand{\defnref}[1]{Definition~\ref{#1}}

\newcommand{\la}{\langle}
\newcommand{\ra}{\rangle}
\newcommand{\ls}{\lesssim}

\newcommand{\XT}{\dot X^{s_p}_{T}}
\newcommand{\X}{\dot X^{s_p}}
\newcommand{\XTs}{\dot X^{s}_{T}}
\newcommand{\XTN}{\dot X^{0}_{T}}
\newcommand{\Ukdv}{U_{\textnormal{KdV}}}
\newcommand{\Vkdv}{V_{\textnormal{KdV}}}
\newcommand{\Bkdv}{B_{\textnormal{KdV}}}
\newcommand{\B}{{\dot B_\infty^{s_p,2}}}
\newcommand{\Bs}{\dot B^{s,2}_\infty}
\renewcommand{\l}{\lambda}
\newcommand{\p}{\partial}
\renewcommand{\d}[1]{d#1}
\newcommand{\dxdt}{dxdt}
\renewcommand{\L}[2]{\def\macroa{#1}\def\macrob{#2}\ifx\macroa\macrob{L_{t,x}^{#1}}\else{L_t^{#1}L_x^{#2}}\fi}

\begin{document}

\title[Well-posedness for supercritical gKdV]{Well-posedness
  for the supercritical gKdV equation}
\author[N.~Strunk]{Nils~Strunk}

\address{Universit\"at Bielefeld, Fakult\"at f\"ur Mathematik,
  Postfach 100131, 33501 Bielefeld, Germany}
\email{strunk@math.uni-bielefeld.de}

\begin{abstract}
  In this paper we consider the supercritical generalized
  Korteweg-de~Vries equation $\p_t\psi + \p_{xxx}\psi +
  \p_x(|\psi|^{p-1}\psi) = 0$, where $5\leq p\in\R$.  We prove a local
  well-posedness result in the homogeneous Besov space $\B(\R)$, where
  $s_p=\frac12-\frac{2}{p-1}$ is the scaling critical index. In
  particular local well-posedness in the smaller inhomogeneous Sobolev space
  $H^{s_p}(\R)$ can be proved similarly. As a byproduct a global well-posedness
  result for small initial data is also obtained.
\end{abstract}
\keywords{}
\maketitle

\section{Introduction}\label{sect:intro}
\noindent
  Consider the initial value problem associated to the generalized
  Korteweg-de~Vries (gKdV) equation, that is
  \begin{equation}
  \label{eq:i_gkdv}
    \left\{
      \begin{array}{rcl}
        \p_t\psi + \p_{xxx}\psi + \p_x\bigl(|\psi|^{p-1}\psi\bigr) & = & 0, \\
        \psi(0,x) & = & \psi_0(x).
      \end{array}
    \right.
  \end{equation}
  Well-posedness results of the Cauchy problem \eqref{eq:i_gkdv} (with
  $p\geq2$) has been studied by many authors in recent years.
  We want to give a brief overview of the best known well-posedness
  results.

  The fundamental work on this topic was done by Kenig, Ponce and Vega
  \cite{KPV93,KPV96} in 1993 and 1996. They proved local and small
  data global well-posedness for the sub-critical cases
  $p\in\{2,3,4\}$ in $H^s(\R)$ for certain $s$. For the KdV equation
  ($p=2$) they proved well-posedness for $s>-\frac34$. In the limiting
  case $s=-\frac34$ existence of solutions has been obtained by
  Christ, Colliander, and Tao \cite{CCT03}. Kenig, Ponce and Vega also
  proved well-posedness of the mKdV equation ($p=3$) for
  $s\geq\frac14$, and of the quartic gKdV equation ($p=4$) for
  $s\geq\frac{1}{12}$. So far the scaling space $H^{s_p}$ with
  $s_p=\frac12-\frac{2}{p-1}$ was not reached for the sub-critical
  cases. That changed in 2007, when Tao \cite{TAO07} proved local
  well-posedness (and global well-posedness for small data) of the
  quartic KdV equation in the scaling critical inhomogeneous Sobolev
  space $\dot H^{-\frac16}$. In 2012 Koch and Marzuola \cite{KM12}
  simplified and strengthened Tao's well-posedness result in the Besov
  space $\dot B^{-\frac16,2}_\infty$. For the supercritical cases
  $p\geq 5$, $p\in\N$, local well-posedness and global well-posedness
  for small data in the scaling critical spaces $\dot H^{s_p}$ was
  obtained by Kenig, Ponce and Vega in 1993. Recently Farah, Linares
  and Pastor extended the global well-posedness result for $p\geq 5$,
  $p\in\N$. In 2003 Molinet and Ribaud \cite{MR03} extended the
  well-posedness result in the supercritical cases to the homogeneous
  Besov space $\dot B^{s_p,2}_\infty(\R)$ with integer $p$. To our
  knowledge well-posedness results for non-integer $p\geq 5$ were not
  obtained so far. We present a unified proof of well-posedness in the
  homogeneous Besov space $\B(\R)$ for all $5\leq p\in\R$.

  In this paper we pick up techniques of Koch and Marzuola
  \cite{KM12} to prove local (and small data global) well-posedness for
  the supercritical gKdV equation, i.e.\ \eqref{eq:i_gkdv} with $5\leq
  p\in\R$.  The well-posedness is proved in the in the homogeneous Besov
  space $\B(\R)$ (see \defnref{def:besov}), where
\[
   s_p=\frac12-\frac{2}{p-1}
\]
  is the scaling critical exponent. The homogeneous Besov space $\B(\R)$
  is slightly larger than the scaling invariant homogeneous Sobolev
  space $\dot H^{s_p}(\R)$ consisting of all functions $u$ such that
\[
   \|u(t)\|_{\dot H^{s_p}} = \left( \sum_{\l\in1.01^{\Z}}\l^{2s_p}
     \|u_\l(t)\|_{L^2}^2 \right)^{1/2} < \infty.
\]
  Here, $u_\l$ denotes the Littlewood-Paley decomposition of $u$ at frequency $\l$ 
  that is defined in \secref{sect:funct_spaces}.

  In the following, let $v$ be a solution to the Airy equation with same
  initial data
  \begin{equation}
  \label{eq:i_airy}
    \left\{
      \begin{array}{rcl}
        \p_tv + \p_{xxx}v & = & 0, \\
        v(0,x) & = & \psi_0(x).
      \end{array}
    \right.
  \end{equation}
  For the quartic gKdV equation
  \begin{equation}
  \label{eq:i_km_gkdv}
    \left\{
      \begin{array}{rcl}
        \p_t\psi + \p_{xxx}\psi + \p_x(\psi^4) & = & 0, \\
        \psi(0,x) & = & \psi_0(x),
      \end{array}
    \right.
  \end{equation}
  Koch and Marzuola \cite{KM12} proved the following local well-posedness result:
  \begin{theorem}[Koch and Marzuola \cite{KM12}]
  \label{thm:km_lwp}
    Let $r_0>0$. Then there exist $\varepsilon_0,\delta_0>0$ such that, if
    $0<T\leq\infty$,
\[
    \|\psi_0\|_{\dot B_\infty^{-\frac16,2}}\leq r_0
\]
    and
\[
    \sup_{\l\in1.01^{\Z}} \|v_\l\|_{L^6([0,T],\R)}\leq\delta_0
\]
    then there is an unique solution $\psi=v+w$ to \eqref{eq:i_km_gkdv} with
\[
    \|w\|_{\dot X_{\infty,T}^{-\frac16}}\leq\varepsilon_0.
\]
    Moreover, the function $w$ (and hence $\psi$) depends analytically on 
    the initial data.
  \end{theorem}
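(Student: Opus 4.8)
The plan is to solve \eqref{eq:i_km_gkdv} by a contraction argument for the nonlinear correction $w := \psi - v$. Since $v$ solves the Airy equation \eqref{eq:i_airy} with the same data $\psi_0$, subtracting the two equations shows that $w$ must satisfy $\p_t w + \p_{xxx} w + \p_x\bigl((v+w)^4\bigr)=0$ with $w(0)=0$. By Duhamel's formula this is equivalent to the fixed-point equation
\[
  w = \Phi(w) := -\int_0^t e^{-(t-t')\p_{xxx}}\,\p_x\bigl((v+w)^4\bigr)(t')\,dt'.
\]
First I would fix $\varepsilon_0$ and seek a solution in the closed ball $B_{\varepsilon_0}=\{w:\|w\|_{\dot X^{-\frac16}_{\infty,T}}\le\varepsilon_0\}$ with the metric inherited from the norm. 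The entire argument then reduces to showing that, for suitable $\varepsilon_0,\delta_0$ depending on $r_0$, the map $\Phi$ sends $B_{\varepsilon_0}$ into itself and is a contraction there.

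Second, I would assemble the two linear ingredients. The homogeneous estimate for the Airy group, together with the definition of the norms, gives $\|v\|_{\dot X^{-\frac16}_{\infty,T}}\ls\|\psi_0\|_{\dot B^{-\frac16,2}_\infty}\le r_0$, so $v$ itself lies in the solution space with a bound governed by $r_0$. The inhomogeneous (retarded) estimate for the Duhamel operator reduces matters to controlling the nonlinearity in an appropriate source space $N$, i.e.\ $\|\Phi(w)\|_{\dot X^{-\frac16}_{\infty,T}}\ls\|\p_x((v+w)^4)\|_N$. Both are frequency-localized estimates summed in $\ell^\infty$ over the $1.01^{\Z}$ frequencies, which is exactly what produces the Besov-type $\dot B^{\,\cdot\,,2}_\infty$ structure.

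The heart of the matter is the quadrilinear estimate. I would expand $(v+w)^4=\sum_{k=0}^{4}\binom{4}{k}v^{4-k}w^k$ and bound the Duhamel image of each monomial. After a Littlewood--Paley decomposition of all four factors, every term becomes a sum over frequencies $\l_1,\dots,\l_4$ of a localized quadrilinear expression, and the output frequency, hence the derivative $\p_x$, is comparable to the largest input frequency. The crucial gain comes from the local-smoothing/maximal-function component built into the $\dot X^{-\frac16}$ norm, which recovers the derivative lost to $\p_x$ in the high-low and high-high interactions, while the low-frequency factors are measured in the critical $L^6_{t,x}$ Strichartz norm. The two hypotheses now enter precisely: each factor of $v$ contributes its smallness $\sup_\l\|v_\l\|_{L^6}\le\delta_0$, and each factor of $w$ contributes a power of $\|w\|_{\dot X^{-\frac16}_{\infty,T}}\le\varepsilon_0$. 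Thus the purely linear source term ($k=0$) is bounded by $C\delta_0^4$ and the remaining terms by $C\sum_{k\ge1}\delta_0^{4-k}\varepsilon_0^{k}$; choosing $\delta_0$ small and then $\varepsilon_0$ small sends $B_{\varepsilon_0}$ into itself. For the contraction, the same estimate is applied to $\Phi(w_1)-\Phi(w_2)$, using that $(v+w_1)^4-(v+w_2)^4$ factorizes as $(w_1-w_2)$ times a cubic in $v,w_1,w_2$ whose three factors are all small.

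The step I expect to be the main obstacle is exactly this quadrilinear estimate at the scaling-critical regularity $s_4=-\frac16$: controlling the derivative loss in the worst frequency configurations forces one to exploit bilinear smoothing rather than crude H\"older/Strichartz bounds, and to organize the $\ell^\infty$ frequency summation so that no logarithmic divergence appears at the critical exponent. Once it is established, the Banach fixed-point theorem yields the unique $w\in B_{\varepsilon_0}$, hence $\psi=v+w$. Finally, since $\Phi$ is a fixed degree-four polynomial in $w$ whose coefficients depend linearly on $v$, and $v$ depends linearly on $\psi_0$, the solution map $\psi_0\mapsto w$ is furnished by the analytic implicit function theorem and is therefore analytic, giving the asserted analytic dependence on the data.
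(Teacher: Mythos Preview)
This theorem is not proved in the present paper; it is quoted from Koch and Marzuola \cite{KM12} as background. The paper's own contribution is \thmref{thm:lwp} for $p\ge 5$, whose proof in \secref{sect:proof} follows the same fixed-point strategy you outline, with one structural difference forced by non-integer $p$: since $|u|^{p-1}u$ is not a polynomial, the paper cannot expand binomially and instead telescopes via the operators $F_\l^\tau(u)=u_{<\l}+\tau u_\l$ and iterated applications of the fundamental theorem of calculus to produce the paraproduct decomposition that your binomial expansion gives for free when $p=4$.

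Your sketch is structurally sound for $p=4$, but one quantitative point is wrong and worth fixing. You claim the pure-$v$ term ($k=0$) is bounded by $C\delta_0^4$ because ``each factor of $v$ contributes its smallness.'' This cannot work: the quadrilinear estimate must absorb the derivative $\p_x$ via the bilinear smoothing bound (\lemref{lem:bilinear}), which forces several factors into $\Ukdv^2$/$\Vkdv^2$ rather than $L^6_{t,x}$. Both in \cite{KM12} and in the present paper only \emph{one} factor is placed in the small $L^6$ norm (see the ``moreover'' clauses of \lemref{lem:multlin_est} and \lemref{lem:multlin_est2}); the remaining factors stay in $\dot X^{-\frac16}_{\infty,T}$ and contribute powers of $r_0$. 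The correct shape of the bound is $Cr_0^{3}(\delta_0+\varepsilon_0)$, which is precisely why $\delta_0$ and $\varepsilon_0$ must be chosen small \emph{depending on $r_0$}, as the statement of the theorem asserts. Your argument still closes once this bookkeeping is corrected, and the analyticity claim via the implicit function theorem is fine since the nonlinearity is genuinely polynomial here.
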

  From this local well-posedness result they even obtained global well-posedness
  for small data $\psi_0$, since one easily proves by Strichartz' estimates
  and the definition of the spaces that 
\[
   \sup_{\l\in1.01^{\Z}} \|v_\l\|_{L^6([0,T],\R)}\ls\|\psi_0\|_{\dot B_\infty^{-\frac16,2}}.
\] 

  In the sequel, we are going to prove the analogue statement in the
  supercritical case, i.e.\ for \eqref{eq:i_gkdv} with $5\leq p\in\R$:
  \begin{theorem}
  \label{thm:lwp}
  Let $5\leq p\in\R$, $s_p=\frac12-\frac{2}{p-1}$ and $r_0>0$. Then there exist
  $\varepsilon_0,\delta_0>0$ such that, if $0<T\leq\infty$,
\[
   \|\psi_0\|_{\B} \leq r_0
\]
     and
\begin{equation}
\label{eqn:smallness_cond}
   \sup_{\l\in1.01^{\Z}} \l^{\frac16+s_p} \|v_\l\|_{L^6([0,T],\R)} \leq \delta_0,
\end{equation}
    then there exists an unique solution $\psi=v+w$ to \eqref{eq:i_gkdv} with
\[
    \|w\|_{\XT} \leq \varepsilon_0.
\]
    Moreover, the solution map is Lipschitz continuous.
  \end{theorem}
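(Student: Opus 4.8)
The plan is to realize the solution in the form $\psi = v+w$, where $v(t)=e^{-t\p_{xxx}}\psi_0$ is the Airy evolution of the initial data from \eqref{eq:i_airy}, and to solve for the nonlinear remainder $w$ by a contraction mapping argument. Substituting $\psi = v+w$ into \eqref{eq:i_gkdv} and using that $v$ solves the homogeneous Airy equation, $w$ must satisfy $\p_t w + \p_{xxx}w = -\p_x\bigl(|v+w|^{p-1}(v+w)\bigr)$ with $w(0)=0$. By Duhamel's formula this is equivalent to the fixed-point equation $w=\Phi(w)$, where
\[
  \Phi(w)(t) = -\int_0^t e^{-(t-t')\p_{xxx}}\,\p_x\bigl(|v+w|^{p-1}(v+w)\bigr)(t')\,dt'.
\]
I would set up the contraction on the closed ball $\{w:\|w\|_{\XT}\leq\varepsilon_0\}$, which is a complete metric space in the $\XT$-norm.

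The first ingredient is the linear inhomogeneous estimate for the Airy propagator established in the function-space section, which bounds $\|\Phi(w)\|_{\XT}$ in terms of a suitable space-time norm of the nonlinearity $\p_x\bigl(|v+w|^{p-1}(v+w)\bigr)$. This reduces the whole problem to a single nonlinear (multilinear) estimate, which is the heart of the argument. After a Littlewood--Paley decomposition, the output frequency $\l$ is distributed among the factors of the nonlinearity, which behaves like a product of $p$ terms; each factor is placed, via H\"older in space and time, into the $L^6$-type Strichartz norm appearing in \eqref{eqn:smallness_cond}, while the derivative $\p_x$ contributes one power of $\l$. The scaling relation $s_p=\frac12-\frac{2}{p-1}$ is precisely what makes the powers of $\l$ coming from $\p_x$ and from the $s_p$-weights cancel, so that the estimate closes at the critical regularity with no room to spare. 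The $v$-factors are then controlled by $\delta_0$ through the smallness hypothesis \eqref{eqn:smallness_cond} and the $w$-factors by $\|w\|_{\XT}\leq\varepsilon_0$, while the full-strength bound $\|\psi_0\|_{\B}\leq r_0$ enters only through the constants.

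The main new difficulty compared with the integer-power case is that for non-integer $p$ the nonlinearity $F(z)=|z|^{p-1}z$ is not a polynomial, so one cannot simply expand the product into a finite multilinear sum. Instead I would rely on a fractional Leibniz (Kato--Ponce type) rule together with the pointwise bounds $|F(z)|\ls|z|^p$ and $|F(a)-F(b)|\ls\bigl(|a|^{p-1}+|b|^{p-1}\bigr)|a-b|$, valid for $p\geq1$, to estimate both the nonlinearity itself and the differences needed for the contraction. Granting the nonlinear estimate, choosing $\varepsilon_0$ and $\delta_0$ small enough (depending on $r_0$) makes $\Phi$ map the ball into itself and act as a contraction there, so Banach's fixed-point theorem yields a unique $w$ in the ball and hence a unique $\psi=v+w$. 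Finally, running the same difference estimates for two data $\psi_0,\tilde\psi_0$ with corresponding Airy evolutions and fixed points transfers the contraction bound into Lipschitz continuity of the solution map. The principal obstacle throughout is this nonlinear estimate at the scaling-critical index $s_p$: the H\"older exponents and the frequency distribution must balance exactly, and the absence of polynomial structure forces one to work with the fractional Leibniz rule and the above pointwise inequalities rather than a direct multilinear expansion.
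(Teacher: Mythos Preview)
Your overall architecture---write $\psi=v+w$, set up Duhamel for $w$, and run a contraction in $\XT$---matches the paper exactly. The gap is in the multilinear estimate, which you correctly identify as the heart of the matter but for which the tools you name would not close.

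Two specific points. First, Strichartz and H\"older alone do not suffice at the critical index: the paper needs the bilinear Airy estimate $\|v_\mu u_\l\|_{L^2_{t,x}}\lesssim\l^{-1}\|v_\mu\|_{U^2_{\textnormal{KdV}}}\|u_\l\|_{U^2_{\textnormal{KdV}}}$ for separated frequencies $\l\geq 1.1\mu$ (Lemma~\ref{lem:bilinear}), interpolated with $L^\infty$, to gain the extra decay that makes the dyadic sums converge when the input frequencies are not all comparable. Your proposal never invokes this bilinear refinement; without it the off-diagonal frequency interactions do not sum. Second, and more to the non-integer issue, the paper does \emph{not} use a Kato--Ponce fractional Leibniz rule. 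Instead it iterates the identity
\[
  f_p(u)=\sum_{\l}\int_0^1 f_p'(u_{<\l}+\tau u_\l)\,d\tau\,u_\l
\]
four times, so that after the fourth step the only non-polynomial factor is $|{\cdot}|^{p-5}$ applied to the \emph{lowest}-frequency piece $u_{\leq\l_2}$; this factor is then harmless because it can be placed in $L^\infty$ (or $L^q$ via Bernstein), while the remaining five frequency-localized factors are estimated by Strichartz and the bilinear bound. This is what produces the concrete integrals in Lemmas~\ref{lem:multlin_est} and~\ref{lem:multlin_est2}. A Kato--Ponce rule would distribute fractional derivatives but would not isolate the non-smooth part onto low frequencies, and it is unclear how such a rule would be formulated for the $U^p/V^p$-based $\XT$ norm in the first place.

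There is also a structural step you do not mention: the paper splits according to whether the output frequency $\mu$ is smaller or larger than the top input frequency $\l_5$. In the first case one integrates $\p_x$ by parts onto $u_\mu$ (gaining $\mu$), in the second case $\p_x$ is differentiated through the product (gaining $\l_i\leq\l_5$), and the two regimes call for different H\"older configurations (Lemma~\ref{lem:multlin_est} versus Lemma~\ref{lem:multlin_est2}). This case split, together with the bilinear estimate and the telescoping decomposition, is what makes the exponents balance; it is not simply that ``$s_p$ makes the powers of $\l$ cancel.''
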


  Using the same arguments as Koch and Marzuola, we obtain global
  well-posedness for small initial data $\psi_0$ as well:
\begin{coro}
\label{cor:gwp}
  Let $5\leq p\in\R$, $s_p=\frac12-\frac{2}{p-1}$ and $\delta_0(1)$
  be the $\delta_0$ of
  \thmref{thm:lwp}, which depends on $r_0$,
  evaluated at $r_0=1$. Let $\kappa_0$ and $\kappa_1$ be the constants
  from \lemref{lem:strichartz} and
  \lemref{lem:airy_est_XT}, respectively. Then there exists
  $\varepsilon_0 > 0$ such that for
\[
    \|\psi_0\|_{\B}\leq \min\left\{1,\frac{\delta_0(1)}{\kappa_0\kappa_1}\right\}
\]
  there is an unique solution $\psi=v+w$ to \eqref{eq:i_gkdv} with
\[
    \|w\|_{\X}\leq\varepsilon_0.
\]
  Moreover, the solution map is Lipschitz continuous.
\end{coro}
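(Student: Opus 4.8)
The plan is to apply \thmref{thm:lwp} directly with the choices $T=\infty$ and $r_0=1$, so that the only real work is to verify that the smallness condition \eqref{eqn:smallness_cond} is satisfied automatically once $\|\psi_0\|_{\B}$ is small enough. First I would fix $r_0=1$, which pins down the threshold $\delta_0(1)$ coming from the local theory, and work throughout on the whole half-line $[0,\infty)$.

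The heart of the matter is to control the $L^6$-norm of each frequency-localized free Airy evolution by the Besov norm of the data, uniformly over all frequencies $\l\in1.01^{\Z}$ and over all of $[0,\infty)$. To this end I would chain \lemref{lem:strichartz} with \lemref{lem:airy_est_XT}: the Strichartz estimate bounds $\l^{\frac16+s_p}\|v_\l\|_{L^6([0,\infty),\R)}$ by $\kappa_0$ times the $\X$-norm of $v$ (where $\X$ is the $T=\infty$ version of $\XT$), while the Airy estimate of \lemref{lem:airy_est_XT} bounds the $\X$-norm of the free evolution by $\kappa_1\|\psi_0\|_{\B}$. Composing the two yields
\[
   \sup_{\l\in1.01^{\Z}} \l^{\frac16+s_p}\|v_\l\|_{L^6([0,\infty),\R)} \leq \kappa_0\|v\|_{\X} \leq \kappa_0\kappa_1\,\|\psi_0\|_{\B}.
\]

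With this bound in hand, the hypothesis $\|\psi_0\|_{\B}\leq\min\{1,\delta_0(1)/(\kappa_0\kappa_1)\}$ guarantees simultaneously that $\|\psi_0\|_{\B}\leq 1=r_0$ and that the left-hand side above is at most $\delta_0(1)$; hence \eqref{eqn:smallness_cond} holds with $\delta_0=\delta_0(1)$. Invoking \thmref{thm:lwp} with $T=\infty$ then produces a unique solution $\psi=v+w$ on $[0,\infty)$ satisfying $\|w\|_{\X}\leq\varepsilon_0$, and the Lipschitz continuity of the solution map is inherited verbatim from that theorem.

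I do not expect a serious obstacle here: the corollary is in essence a bookkeeping argument that feeds a global-in-time, frequency-uniform Strichartz bound into the local well-posedness statement. The only point demanding care is that both \lemref{lem:strichartz} and \lemref{lem:airy_est_XT} remain valid for $T=\infty$ with constants $\kappa_0,\kappa_1$ independent of $\l$, so that the intermediate estimate is genuinely global and uniform; granting this—as the statement already presupposes by naming $\kappa_0$ and $\kappa_1$—the conclusion follows at once.
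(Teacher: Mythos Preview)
Your proposal is correct and matches the paper's own proof essentially line for line: chain \lemref{lem:strichartz} and \lemref{lem:airy_est_XT} to get $\sup_\l \l^{\frac16+s_p}\|v_\l\|_{L^6}\leq\kappa_0\kappa_1\|\psi_0\|_{\B}\leq\delta_0(1)$, then invoke \thmref{thm:lwp} with $r_0=1$ and $T=\infty$. There is nothing to add.
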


  The main ingredient of the proof of \thmref{thm:lwp} is a
  multi-linear estimate that gives bounds on the Duhamel term of the
  nonlinearity. A crucial tool to get these estimates are the recently
  introduced $U^p$ and $V^p$ spaces.  The rest of the proof is a
  standard fixed point argument to get existence and
  uniqueness. However, due to the non-integer exponents, this
  argument gets a bit more delicate.
  \begin{remark}
  \label{rem:sobolev_spaces}
  The analogue local and global well-posedness in the inhomogeneous
  Sobolev space $H^{s_p}$ follows along these lines. Note that the
  function spaces and the summation has to be modified.
  \end{remark}
  \begin{remark}
  \label{rem:small_cond}
  It is possible to choose different H\"older exponents in the proof
  of the multi-linear estimates (\lemref{lem:multlin_est} and
  \lemref{lem:multlin_est2}) and hence to require an other smallness
  condition replacing the smallness condition \eqref{eqn:smallness_cond}
  of the linear solution.
  \end{remark}

  Throughout this paper, we will use mixed Lebesgue spaces $\L{p}{q}$ which are
  defined via the norm
\[
   \|f\|_{\L{p}{q}}=\left(\int \|f(t,\cdot)\|_{L_x^q}^p\d{t}\right)^{\frac1p},
   \quad 1\leq p < \infty,\;1\leq q\leq\infty,
\]
  and with obvious modifications for $p=\infty$. If $p=q$, then we
  write $\L{p}{p}$ for brevity. Moreover, we want to mention that
  we write $A\ls B$, if there is a harmless constant $c>0$ such that
  $A\leq cB$. 

  This paper is organized as follows: In \secref{sect:funct_spaces} we
  give a brief introduction to the function spaces used in this paper.
  \secref{sec:wp_estimates_u_v} provides some basic linear and bilinear
  estimates. Multi-linear estimates
  to control the Duhamel term of the nonlinearity
  are proved in \secref{sect:multilin_est}. \thmref{thm:lwp} and the
  global well-posedness result is proved in \secref{sect:proof}.

{\sc Acknowledgments} 
This paper is an extension of the diploma thesis of the author.
The author wishes to thank the thesis advisor Herbert Koch and
Sebastian Herr for helpful comments while working on this result.

\section{Function spaces}\label{sect:funct_spaces}
\noindent
Crucial tools to prove this well-posedness results are the
function spaces $U^p$, which have been introduced in the context of dispersive
PDEs by Tataru and Koch-Tataru \cite{KT05,KT07} as well as the closely related
spaces of bounded $p$-Variation $V^p$ due to Wiener \cite{WIE24}.
The following exposition of the $U^p$ and $V^p$ spaces may be found
in \cite{HHK09}. We refer the reader to this paper for detailed
definitions and proofs.

We consider functions taking values in $L^2=L^2(\R^d,\R)$, but in the
general part of this section one may replace $L^2$ by an arbitrary Hilbert
space. Let $\Zz$ be the set of finite partitions
$-\infty<t_0<t_1<\ldots<t_K\leq\infty$.

\begin{definition}
\label{def:u}
  Let $1\leq p<\infty$. For $\{t_k\}_{k=0}^K\in\Zz$ and
  $\{\phi_k\}_{k=0}^{K-1}\subset L^2$ with
  $\sum_{k=0}^{K-1}\|\phi_k\|_{L^2}^p=1$ and $\phi_0=0$, we call the function
  $a:\R\to L^2$ given by
  \begin{equation*}
  \label{eqn:u_atom}
    a=\sum_{k=1}^K\chi_{[t_{k-1},t_k)}\phi_{k-1}
  \end{equation*}
  a $U^p$-atom. Furthermore, we define the atomic space
  \begin{equation*}
  \label{eqn:atomic_space}
    U^p=\left\{u=\sum_{j=1}^\infty \l_j a_j : a_j~U^p
     \text{-atom, }\l_j\in\C,
     \text{ s.\,t.\ }\sum_{j=1}^\infty\left|\l_j\right|<\infty\right\}
  \end{equation*}
  endowed with the norm
  \begin{equation*}
  \label{eqn:u-norm}
    \|u\|_{U^p}:=\inf\left\{\sum_{j=1}^\infty |\l_j| :
    u=\sum_{j=1}^\infty \l_j a_j\in\C,
    \text{ s.\,t.\ }\sum_{j=1}^\infty \left|\l_j\right|<\infty\right\}.
  \end{equation*}
\end{definition}

Two useful statements about $U^p$ are collected in the following
\begin{prop}
\label{prop:u}
  Let $1\leq p < q < \infty$.
  \begin{enumerate}
    \item\label{it:u_banach} $\|\cdot\|_{U^p}$ is a norm. The space $U^p$ is
      complete and hence a Banach space.
    \item\label{it:u_emb} The embeddings $U^p \subset U^q \subset
      L^\infty(\R,L^2)$ are continuous. 
  \end{enumerate}
\end{prop}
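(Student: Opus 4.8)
The plan is to treat the two assertions in turn, with the key elementary fact underlying both being a pointwise bound on atoms. First I would establish that every $U^p$-atom $a=\sum_{k=1}^K\chi_{[t_{k-1},t_k)}\phi_{k-1}$ satisfies $\|a\|_{L^\infty(\R,L^2)}\le 1$: since the normalization $\sum_{k=0}^{K-1}\|\phi_k\|_{L^2}^p=1$ forces each $\|\phi_k\|_{L^2}\le 1$, the step function $a$ takes values whose $L^2$-norm is at most $1$. Consequently, for any decomposition $u=\sum_j\lambda_j a_j$ one has $\|u\|_{L^\infty(\R,L^2)}\le\sum_j|\lambda_j|$, and taking the infimum over all decompositions gives $\|u\|_{L^\infty(\R,L^2)}\le\|u\|_{U^p}$. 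This single estimate delivers both the final embedding of assertion (ii) and the definiteness needed in assertion (i).

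With the atom bound in hand, the norm axioms are routine. Homogeneity is immediate from rescaling the coefficients in any atomic decomposition; the triangle inequality follows by concatenating the decompositions of $u$ and of $v$ into a single decomposition of $u+v$ and taking infima; and definiteness, that $\|u\|_{U^p}=0$ implies $u=0$, follows from the $L^\infty(\R,L^2)$ bound above. For completeness I would invoke the standard criterion that a normed space is complete provided every absolutely summable series converges. Given a sequence $u_n$ with $\sum_n\|u_n\|_{U^p}<\infty$, I would pick near-optimal decompositions $u_n=\sum_j\lambda_{n,j}a_{n,j}$ with $\sum_j|\lambda_{n,j}|\le\|u_n\|_{U^p}+2^{-n}$ and merge all of the atoms $\{a_{n,j}\}$ into one series. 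Since $\sum_{n,j}|\lambda_{n,j}|<\infty$, this defines an element $u\in U^p$; convergence of the partial sums to $u$ in $U^p$ then follows by applying the triangle inequality to the tails, while the $L^\infty(\R,L^2)$ estimate identifies the limit with the genuine function $\sum_n u_n$.

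For the nesting $U^p\subset U^q$ with $p<q$, the idea is that the inclusion $\ell^p\hookrightarrow\ell^q$ is contractive on sequences bounded by $1$. Given a $U^p$-atom $a$ with coefficients $\{\phi_k\}$, the same step function satisfies $\bigl(\sum_k\|\phi_k\|_{L^2}^q\bigr)^{1/q}=:c\le\bigl(\sum_k\|\phi_k\|_{L^2}^p\bigr)^{1/p}=1$, because each $\|\phi_k\|_{L^2}\le 1$ and $q>p$. Hence $a=c\,\tilde a$, where $\tilde a$ is a normalized $U^q$-atom (and $a=0$ if $c=0$), so $\|a\|_{U^q}\le c\le 1$. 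Passing to a general $u=\sum_j\lambda_j a_j\in U^p$ gives $\|u\|_{U^q}\le\sum_j|\lambda_j|$, and taking the infimum yields $\|u\|_{U^q}\le\|u\|_{U^p}$, a contractive embedding. Combined with the first paragraph, the chain $U^p\subset U^q\subset L^\infty(\R,L^2)$ is continuous.

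I expect the completeness argument to be the main technical point: one must check that merging countably many atomic decompositions produces an admissible decomposition, in that the combined coefficient sequence is summable and each term is a genuine $U^p$-atom, and that the resulting series converges to the same limit in both $U^p$ and $L^\infty(\R,L^2)$, so that the $U^p$-limit is correctly identified with the pointwise-defined sum. The norm axioms and the embeddings, by contrast, reduce entirely to the two observations that atoms are $L^\infty(\R,L^2)$-normalized and that the $\ell^p$ normalization of an atom dominates its $\ell^q$ normalization.
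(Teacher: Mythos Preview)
Your argument is correct and is essentially the standard proof. Note, however, that the paper itself does not supply a proof of this proposition: it states at the beginning of Section~2 that ``the following exposition of the $U^p$ and $V^p$ spaces may be found in \cite{HHK09}. We refer the reader to this paper for detailed definitions and proofs,'' and Proposition~\ref{prop:u} is simply quoted without proof. Your approach matches the one in \cite{HHK09}: the $L^\infty(\R,L^2)$ bound on atoms gives both definiteness and the second embedding, the $\ell^p\hookrightarrow\ell^q$ renormalization handles $U^p\subset U^q$, and completeness is obtained from the absolutely-summable-series criterion by merging near-optimal atomic decompositions. One small remark: the atom definition here requires $\phi_0=0$, and you should observe (trivially) that this is preserved under the rescaling in your $U^p\subset U^q$ step, so the rescaled function is indeed an admissible $U^q$-atom.
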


\begin{definition}
\label{def:v}
  Let $1\leq p<\infty$.
  \begin{enumerate}
  \item We define $V^p$ as the normed space of all functions $v:\R \to
    L^2$ such that $\lim_{t\to\pm\infty}v(t)$ exists and for which the
    norm
    \begin{equation*}
      \label{eqn:hom_norm_v}
      \|v\|_{V^p}:=\sup_{\{t_k\}_{k=0}^K \in \Zz}
      \left(\sum_{k=1}^{K}
        \|v(t_{k})-v(t_{k-1})\|_{L^2}^p\right)^{\frac{1}{p}}
    \end{equation*}
    is finite. We use the convention that
    $v(-\infty)=\lim_{t\to-\infty}v(t)$ and $v(\infty)=0.$
  \item We denote the closed subspace of all right-continuous functions
    $v:\R\to L^2$ such that $\lim_{t\to -\infty}v(t)=0$ by $V_{rc}^p$.
  \end{enumerate}
\end{definition}

\begin{remark}
\label{rem:v}
  Note that we set $v(\infty)=0$, which may differ from the limit of $v$ at
  $\infty$.
\end{remark}

\begin{prop}
\label{prop:v}
  Let $1\leq p<q <\infty$.
  \begin{enumerate}
    \item\label{it:v_emb1} The embedding $U^p\subset V_{rc}^p$ is continuous.
    \item\label{it:v_emb2} The embeddings $V^p\subset V^q$ are continuous.
    \item\label{it:v_emb3} The embedding $V_{rc}^p\subset U^q$ is continuous,
       and
       \begin{equation*}
       \label{eqn:u_v_embed_est}
         \|v\|_{U^q} \leq c_{p,q}\|v\|_{V^p}.
       \end{equation*}
  \end{enumerate}
\end{prop}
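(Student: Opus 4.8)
The plan is to prove the three embeddings in the order stated, reserving the real work for \ref{it:v_emb3}. For \ref{it:v_emb1}, since $\|\cdot\|_{V^p}$ is a norm and hence subadditive, it suffices to bound a single $U^p$-atom and then sum: for $a=\sum_{k=1}^K\chi_{[t_{k-1},t_k)}\phi_{k-1}$ as in \defnref{def:u}, every increment entering the $V^p$-sum over a partition is a difference of two of the values $\phi_0,\dots,\phi_{K-1}$ (or such a value and $0$), so the elementary estimate $\|\phi_k-\phi_{k-1}\|_{L^2}^p\le 2^{p-1}(\|\phi_k\|_{L^2}^p+\|\phi_{k-1}\|_{L^2}^p)$ together with the normalization $\sum_k\|\phi_k\|_{L^2}^p=1$ (each value being counted at most twice) yields $\|a\|_{V^p}\le 2$. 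Hence $\|u\|_{V^p}\le 2\sum_j|\l_j|$ for every atomic representation of $u$, and taking the infimum gives $\|u\|_{V^p}\le 2\|u\|_{U^p}$; I would read this partition by partition, using that $u$ is the $L^\infty(\R,L^2)$-limit of its partial sums from \propref{prop:u}, which simultaneously delivers right-continuity and $u(-\infty)=0$, i.e.\ $u\in V_{rc}^p$. For \ref{it:v_emb2} I would simply invoke the monotonicity $\|\cdot\|_{\ell^q}\le\|\cdot\|_{\ell^p}$ for $p\le q$: applied to the increment sequence $(\|v(t_k)-v(t_{k-1})\|_{L^2})_k$ of an arbitrary partition it gives $\|v\|_{V^q}\le\|v\|_{V^p}$, and the limits at $\pm\infty$ are inherited.

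The heart of the matter is \ref{it:v_emb3}, where the hypothesis $p<q$ becomes essential. By homogeneity I may assume $\|v\|_{V^p}=1$. For each integer $k\ge0$ I would build a partition by a stopping-time construction: set $t_0^k=-\infty$ and $t_{j+1}^k=\inf\{t>t_j^k:\|v(t)-v(t_j^k)\|_{L^2}\ge 2^{-k}\}$ (and $=\infty$ if the set is empty); right-continuity makes these well defined and forces the level-$(k{+}1)$ partition to refine the level-$k$ one. Let $v^k$ be the step function equal to $v(t_j^k)$ on $[t_j^k,t_{j+1}^k)$. Two observations drive the estimate. First, each completed level-$k$ interval carries an increment of size $\ge 2^{-k}$, so $\sum(\text{increments})^p\le\|v\|_{V^p}^p=1$ bounds the number $N_k$ of level-$k$ intervals by $N_k\ls 2^{kp}$. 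Second, by construction $\|v(t)-v^k(t)\|_{L^2}<2^{-k}$ for every $t$, so $v^k\to v$ uniformly and $v=v^0+\sum_{k\ge0}(v^{k+1}-v^k)$ in $L^\infty(\R,L^2)$.

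It then remains to estimate each telescoping difference in $U^q$. The function $v^{k+1}-v^k$ is a finite step function starting at $0$ whose values are differences $v(t_j^{k+1})-v(t_i^k)$ with $t_i^k$ the last level-$k$ time not exceeding $t_j^{k+1}$; since $v$ moves by less than $2^{-k}$ between level-$k$ times, each such value has $L^2$-norm $<2^{-k}$. Reading a step function with values $\psi_0=0,\psi_1,\dots$ as $(\sum_j\|\psi_j\|_{L^2}^q)^{1/q}$ times a normalized $U^q$-atom, and using $N_{k+1}\ls 2^{(k+1)p}$, I obtain
\[
  \|v^{k+1}-v^k\|_{U^q}\le N_{k+1}^{1/q}\,2^{-k}\ls 2^{(k+1)p/q}\,2^{-k}=2^{p/q}\,2^{k(p/q-1)}.
\]
Because $p<q$ the exponent $p/q-1$ is negative, so $\sum_{k\ge0}2^{k(p/q-1)}<\infty$; combined with the trivial bound $\|v^0\|_{U^q}\ls 1$ this gives $\|v\|_{U^q}\le c_{p,q}$ under the normalization, i.e.\ $\|v\|_{U^q}\le c_{p,q}\|v\|_{V^p}$ in general. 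The main obstacle is precisely this last part: setting up the stopping times so that the telescoping pieces are genuine $U^q$-atoms, controlling their number by the fixed $V^p$-budget, and converting the gap $p<q$ into a summable geometric series; once that is in place, \ref{it:v_emb1} and \ref{it:v_emb2} are routine.
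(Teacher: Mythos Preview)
The paper does not supply its own proof of this proposition: it is quoted from \cite{HHK09}, to which the reader is referred for all proofs concerning the $U^p$/$V^p$ machinery. Your sketch is essentially the standard argument that appears there, so in that sense you have reproduced the intended proof.

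One technical point deserves correction. You assert that right-continuity ``forces the level-$(k{+}1)$ partition to refine the level-$k$ one''. This is not true as stated. For instance, take $v=0$ on $(-\infty,0)$, $v=0.3\,e$ on $[0,1)$, and $v=0.5\,e$ on $[1,\infty)$ with $\|e\|_{L^2}=1$. At threshold $2^{-1}=0.5$ the stopping times are $\{-\infty,1,\infty\}$, while at threshold $2^{-2}=0.25$ they are $\{-\infty,0,\infty\}$ (since $\|0.5e-0.3e\|=0.2<0.25$); the finer threshold does \emph{not} pick up the point $t=1$. The fix is painless: either force nesting by hand (at level $k{+}1$ include all level-$k$ stopping times and restart the greedy search from each), or simply drop the nesting claim and observe that $v^{k+1}-v^k$ is a step function on the common refinement with at most $N_k+N_{k+1}\ls 2^{kp}$ pieces, each value satisfying
\[
  \|v^{k+1}(t)-v^k(t)\|_{L^2}\le\|v^{k+1}(t)-v(t)\|_{L^2}+\|v(t)-v^k(t)\|_{L^2}<2^{-(k+1)}+2^{-k}<2\cdot 2^{-k}.
\]
Either route yields $\|v^{k+1}-v^k\|_{U^q}\ls 2^{k(p/q-1)}$ and the geometric summation goes through unchanged. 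With this adjustment your argument for \ref{it:v_emb3} is correct, and parts \ref{it:v_emb1} and \ref{it:v_emb2} are fine as written.
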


\begin{prop}
\label{prop:bilin_from} 
  For $u\in U^p$ and $v\in V^{p'}$, where $1=\frac1p+\frac{1}{p'}$, and a
  partition $\Tt:=\{t_k\}_{t=0}^K\in\Zz$ we define
  \begin{equation*}
  \label{eqn:pre_bilin_from}
    B_\Tt(u,v) := \sum_{k=1}^K \bigl\la u(t_{k-1}), v(t_k)-v(t_{k-1}) \bigr\ra,
  \end{equation*}
  where $\la \cdot,\cdot \ra$ denotes the inner product of $L^2$. Notice that
  $v(t_K)=0$ since $t_K=0$ for all partitions $\{t_k\}_{t=0}^K\in\Zz$. There is
  a unique number $B(u,v)$ with the property that for all $\varepsilon > 0$
  there exists $\Tt\in\Zz$ such that for every $\Tt'\subset\Tt$ it holds
  \begin{equation*}
  \label{eqn:bilin_approx}
    \left| B_{\Tt'}(u,v) - B(u,v) \right| < \varepsilon,
  \end{equation*}
  and the associated bilinear form
  \begin{equation*}
  \label{eqn:bilin_map}
    B: U^p \times V^{p'}: (u,v)\mapsto B(u,v)
  \end{equation*}
  satisfies the estimate
  \begin{equation*}
  \label{eqn:bilin_est}
    \left| B(u,v) \right| \leq \|u\|_{U^p}\|v\|_{V^{p'}}.
  \end{equation*}
\end{prop}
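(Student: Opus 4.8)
The plan is to prove the proposition in two stages: first the uniform estimate $|B_\Tt(u,v)| \le \|u\|_{U^p}\|v\|_{V^{p'}}$ for \emph{every} fixed partition $\Tt \in \Zz$, and only then the existence and uniqueness of the limiting value $B(u,v)$ along refinements of $\Tt$.

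For the uniform estimate I would first note that $B_\Tt(\cdot,v)$ is linear in its first argument and continuous with respect to the $L^\infty(\R,L^2)$-norm, since it is the finite sum $\sum_{k=1}^K\la u(t_{k-1}),v(t_k)-v(t_{k-1})\ra$ of evaluations of $u$ against fixed increments of $v$. Combined with the embedding $U^p\subset L^\infty(\R,L^2)$ from \propref{prop:u} and the atomic decomposition, this reduces the bound to the case of a single $U^p$-atom $a=\sum_{k=1}^n\chi_{[\tau_{k-1},\tau_k)}\phi_{k-1}$ with $\sum_k\|\phi_{k-1}\|_{L^2}^p=1$. For such a step function the decisive observation is a telescoping identity: grouping the indices $k$ of $\Tt$ according to which atom-interval $[\tau_{j-1},\tau_j)$ contains $t_{k-1}$ — these indices form consecutive runs because the $t_k$ increase — the inner sum over each run collapses, so that
\[
  B_\Tt(a,v)=\sum_{j}\la\phi_{j-1},\,v(\beta_j)-v(\alpha_j)\ra,
\]
where the endpoints $\alpha_j\le\beta_j$ form an increasing sub-family of $\Tt$ (with $\beta_j=\alpha_{j+1}$ on adjacent nonzero runs). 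Hölder's inequality with exponents $p$ and $p'$ then yields $|B_\Tt(a,v)|\le\bigl(\sum_j\|\phi_{j-1}\|_{L^2}^p\bigr)^{1/p}\bigl(\sum_j\|v(\beta_j)-v(\alpha_j)\|_{L^2}^{p'}\bigr)^{1/p'}\le\|v\|_{V^{p'}}$, using the normalization of the atom and the definition of the $V^{p'}$-norm. Writing a general $u\in U^p$ as $\sum_i\lambda_ia_i$, summing, and passing to the infimum over decompositions gives $|B_\Tt(u,v)|\le\|u\|_{U^p}\|v\|_{V^{p'}}$ for all $\Tt$.

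For the second stage I would exploit that, once $\Tt$ contains all breakpoints of finitely many atoms, the telescoped formula above shows $B_\Tt\bigl(\sum_{i\le N}\lambda_ia_i,v\bigr)$ to be \emph{invariant} under further refinement. Given $\varepsilon>0$, fix a decomposition $u=\sum_i\lambda_ia_i$ with $\sum_i|\lambda_i|<\infty$, split $u=u_N+r_N$ with $u_N=\sum_{i\le N}\lambda_ia_i$ and $\sum_{i>N}|\lambda_i|$ so small that $\|r_N\|_{U^p}\|v\|_{V^{p'}}<\varepsilon/2$, and take $\Tt$ to resolve the jumps of $a_1,\dots,a_N$. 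For any two refinements $\Tt',\Tt''$ of $\Tt$ the $u_N$-contributions cancel by refinement-invariance, while the $r_N$-contributions are each bounded by $\|r_N\|_{U^p}\|v\|_{V^{p'}}$ via the first-stage estimate; hence $|B_{\Tt'}(u,v)-B_{\Tt''}(u,v)|<\varepsilon$. This makes the net $(B_\Tt(u,v))_\Tt$ Cauchy along refinements, defines the unique limit $B(u,v)$ with the stated approximation property, and the bound $|B(u,v)|\le\|u\|_{U^p}\|v\|_{V^{p'}}$ passes to the limit; bilinearity is inherited from that of each $B_\Tt$.

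The main obstacle is precisely this second stage: $B_\Tt(u,v)$ is genuinely \emph{not} refinement-invariant for a general $u$ (only for finite atomic combinations, and only after the partition resolves their jumps), so one cannot simply evaluate on a single sufficiently fine partition. The argument must combine the exact invariance on the finite-atom head $u_N$ with the \emph{uniform-in-$\Tt$} smallness of the tail $r_N$ furnished by the first stage, and getting the order of quantifiers right — choosing $N$, and hence $\Tt$, before the competing refinements — is where care is needed.
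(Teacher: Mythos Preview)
The paper does not supply its own proof of this proposition; it is quoted from \cite{HHK09}, to which the reader is referred for all proofs in this section. Your argument is correct and is essentially the proof given in that reference: first bound $|B_\Tt(u,v)|$ uniformly by reducing to a single atom via the $L^\infty_tL^2_x$-continuity of $B_\Tt(\cdot,v)$, telescoping the atom against the partition, and applying H\"older; then obtain the limit by combining exact refinement-invariance on a finite atomic head $u_N$ with the uniform-in-$\Tt$ tail bound on $r_N$ to make the net Cauchy along refinements.

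Two minor remarks. First, the condition ``$\Tt'\subset\Tt$'' in the paper's statement is evidently a typo for ``$\Tt'\supset\Tt$'' (refinements), which is how you correctly interpret it and how it appears in \cite{HHK09}. Second, because of the convention $v(t_K)=0$ built into $B_\Tt$, your refinement-invariance step for $B_\Tt(a_i,v)$ requires the chosen base partition $\Tt$ to have its last node strictly beyond the final breakpoint of each $a_1,\dots,a_N$ (taking $t_K=\infty$ suffices); otherwise a refinement that adds points to the right could change the telescoped value. This is easily arranged and does not affect the structure of your argument.
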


\begin{prop}
\label{prop:v_u_duality}
  Let $1 < p < \infty$. We have
  \begin{equation*}
  \label{eqn:v_u_duality}
    \left(U^p\right)^* = V^{p'}
  \end{equation*}
  in the sense that
  \begin{equation*}
  \label{eqn:duality_map}
    T:V^{p'}\to\left(U^p\right)^*,\quad T(v):=B(\cdot,v)
  \end{equation*}
  is an isometric isomorphism.
\end{prop}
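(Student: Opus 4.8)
The plan is to verify three things: that $T$ is well-defined and bounded with $\|T(v)\|_{(U^p)^*}\le\|v\|_{V^{p'}}$, that conversely $\|v\|_{V^{p'}}\le\|T(v)\|_{(U^p)^*}$, and that $T$ is onto; together with linearity of $T$ (which is clear) this yields the isometric isomorphism. The first point is immediate from \propref{prop:bilin_from}: the form $B(\cdot,v)$ is linear on $U^p$ and satisfies $|B(u,v)|\le\|u\|_{U^p}\|v\|_{V^{p'}}$, so $T(v)=B(\cdot,v)\in(U^p)^*$ with $\|T(v)\|_{(U^p)^*}\le\|v\|_{V^{p'}}$.

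For the reverse inequality I would fix $v\in V^{p'}$ and a partition $\{t_k\}_{k=0}^K\in\Zz$, write $d_k=v(t_k)-v(t_{k-1})$ and $S=\bigl(\sum_{k=1}^K\|d_k\|_{L^2}^{p'}\bigr)^{1/p'}$ (assuming $S>0$), and then choose the coefficients $\phi_{k-1}=S^{1-p'}\|d_k\|_{L^2}^{p'-2}d_k$ (and $\phi_{k-1}=0$ when $d_k=0$). A direct computation using $p'=p/(p-1)$ gives $\sum_{k=1}^K\|\phi_{k-1}\|_{L^2}^p=1$, so after inserting one extra node to the left to make the first coefficient vanish, the step function $u=\sum_{k=1}^K\chi_{[t_{k-1},t_k)}\phi_{k-1}$ is a $U^p$-atom and $\|u\|_{U^p}\le1$. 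Since $u$ is constant on each interval of the partition, refining does not change $B_\Tt(u,v)$, so $B(u,v)=B_\Tt(u,v)=\sum_{k=1}^K\la\phi_{k-1},d_k\ra=S$. Hence $S=T(v)(u)\le\|T(v)\|_{(U^p)^*}$, and taking the supremum over partitions gives $\|v\|_{V^{p'}}\le\|T(v)\|_{(U^p)^*}$. Combined with the first step, $T$ is isometric, in particular injective.

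It remains to show surjectivity. Given $L\in(U^p)^*$, I would note that for each $t\in\R$ and $\phi\in L^2$ the single-step function $\chi_{[t,\infty)}\phi$ equals $\|\phi\|_{L^2}$ times a $U^p$-atom, so $\phi\mapsto-L(\chi_{[t,\infty)}\phi)$ is a bounded functional of norm $\le\|L\|$ on $L^2$; let $v(t)\in L^2$ be its Riesz representative, i.e.\ $\la\phi,v(t)\ra=-L(\chi_{[t,\infty)}\phi)$. Writing $\chi_{[t_{k-1},t_k)}=\chi_{[t_{k-1},\infty)}-\chi_{[t_k,\infty)}$ yields $\la\phi,d_k\ra=L(\chi_{[t_{k-1},t_k)}\phi)$, and repeating the atom construction of the previous paragraph shows $\sum_k\|d_k\|_{L^2}^{p'}\le\|L\|^{p'}$ for every partition. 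Thus $v$ has bounded $p'$-variation, which forces the one-sided limits at $\pm\infty$ to exist, so $v\in V^{p'}$ with $\|v\|_{V^{p'}}\le\|L\|$. Finally, for any step function $u=\sum_k\chi_{[t_{k-1},t_k)}\psi_{k-1}$ (allowing $t_K=\infty$) a telescoping computation, in which the convention $v(\infty)=0$ matches exactly the defining relation for $v(t_{K-1})$, gives $B(u,v)=L(u)$; since finite sums of atoms are dense in $U^p$ and both $B(\cdot,v)$ and $L$ are continuous, I conclude $T(v)=L$.

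The main obstacle I expect is this last step, namely reconstructing $v$ from $L$ and verifying $T(v)=L$. The delicate points are that $\chi_{[t,\infty)}\phi$ does not converge in $U^p$ as $t\to-\infty$, so the existence of the limits of $v$ at infinity must be deduced from the bounded-$p'$-variation bound (regulated function) rather than from continuity of $L$, and that the behaviour at $+\infty$ must be reconciled with the built-in convention $v(\infty)=0$ in the definition of $B$. Once these are in place, the agreement of $B(\cdot,v)$ and $L$ on the dense set of step functions closes the argument.
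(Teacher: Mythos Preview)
The paper does not actually prove \propref{prop:v_u_duality}; it is stated without proof as part of the exposition of $U^p$/$V^p$ spaces, with the reader referred to \cite{HHK09} for details. Your argument is correct and is essentially the standard proof found there: the bound $\|T(v)\|\le\|v\|_{V^{p'}}$ from \propref{prop:bilin_from}, the reverse inequality by testing against the dualizing atom built from the increments $d_k$, and surjectivity by reading off $v(t)$ from the action of $L$ on indicator steps $\chi_{[t,\infty)}\phi$ and checking agreement on step functions by density. The technical points you flag (inserting an extra node so that the first atom coefficient vanishes, deducing existence of limits at $\pm\infty$ from bounded $p'$-variation, and matching the convention $v(\infty)=0$) are exactly the ones that need care, and you handle them correctly.
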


\begin{coro}
\label{cor:duality_u_v}
  For $1<p<\infty$, $u\in U^p$ and for $v\in V^p$ the following estimates hold
  true
  \begin{align*}
    \|u\|_{U^p} &= \sup_{\substack{v\in V^{p'}\\ \|v\|_{V^{p'}}=1}} |B(u,v)|
  \intertext{and}
    \|v\|_{V^p} &= \sup_{u\textnormal{ }U^{p'}\textnormal{-atom}} |B(u,v)|.
  \end{align*}
\end{coro}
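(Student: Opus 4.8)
The plan is to obtain both identities from the two structural results already at hand: the isometric duality $(U^p)^*=V^{p'}$ of \propref{prop:v_u_duality} and the bilinear estimate of \propref{prop:bilin_from}.

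\emph{First identity.} The equality $\|u\|_{U^p}=\sup_{\|v\|_{V^{p'}}=1}|B(u,v)|$ is little more than a reformulation of \propref{prop:v_u_duality}. I would use the Hahn--Banach consequence that in any normed space $X$ one has $\|x\|_X=\sup\{|\phi(x)|:\phi\in X^*,\ \|\phi\|_{X^*}\le1\}$, applied with $X=U^p$, and then invoke that $T:v\mapsto B(\cdot,v)$ is an \emph{isometric} isomorphism of $V^{p'}$ onto $(U^p)^*$. Hence every norming functional is of the form $B(\cdot,v)$ with $\|v\|_{V^{p'}}=\|\phi\|_{(U^p)^*}$, and restricting to the unit sphere converts the supremum over functionals into the supremum over $\{v:\|v\|_{V^{p'}}=1\}$. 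This step is routine.

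\emph{Second identity.} Here $B$ denotes the bilinear form on $U^{p'}\times V^p$ furnished by \propref{prop:bilin_from} with the roles of $p$ and $p'$ exchanged (note $(p')'=p$). The inequality $\sup_{a}|B(a,v)|\le\|v\|_{V^p}$ is immediate: by \defnref{def:u} every $U^{p'}$-atom $a$ admits the single-term representation $\lambda_1=1$, so $\|a\|_{U^{p'}}\le1$, and the bilinear estimate gives $|B(a,v)|\le\|a\|_{U^{p'}}\|v\|_{V^p}\le\|v\|_{V^p}$. For the reverse inequality, fix a partition $\{s_0<\dots<s_K\}\in\Zz$ and set $\Delta_j:=v(s_j)-v(s_{j-1})$. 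I would construct an atom $a$ realizing the corresponding partition sum. The structural constraint $\phi_0=0$ in the atom definition would otherwise annihilate the first increment $\Delta_1$; I avoid this by prepending a dummy interval, choosing the atom's partition to be $t_0<t_1<\dots<t_{K+1}$ with $t_0<s_0$ and $t_j=s_{j-1}$ for $1\le j\le K+1$, and letting $a$ equal $\phi_j$ on $[t_j,t_{j+1})$ with $\phi_0=0$. Since $a$ is a step function adapted to this partition, $B(a,v)$ is unchanged under refinement and so equals the finite sum $B_\Tt(a,v)=\sum_{j=1}^K\la\phi_j,\Delta_j\ra$. Choosing the H\"older extremizer $\phi_j=\alpha\,\|\Delta_j\|_{L^2}^{p-2}\Delta_j$ (and $\phi_j=0$ where $\Delta_j=0$) with $\alpha=\bigl(\sum_j\|\Delta_j\|_{L^2}^p\bigr)^{-1/p'}$, one uses $(p-1)p'=p$ to verify both the normalization $\sum_{j}\|\phi_j\|_{L^2}^{p'}=1$ and $B(a,v)=\bigl(\sum_j\|\Delta_j\|_{L^2}^p\bigr)^{1/p}$. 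Taking the supremum over all partitions then yields $\|v\|_{V^p}\le\sup_a|B(a,v)|$.

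\emph{Main obstacle.} The crux is the reverse inequality of the second identity. Two points need care: the bookkeeping forced by the normalization $\phi_0=0$, which I handle via the dummy-interval shift, and the fact that for a step function $a$ the limiting quantity $B(a,v)$ coincides with the elementary partition sum $B_\Tt(a,v)$ (stability under refinement). The particular choice of the $\phi_j$ is exactly the equality case of the two nested applications of Cauchy--Schwarz and H\"older implicit in the bilinear estimate, so once the indexing is arranged the remaining computation is mechanical.
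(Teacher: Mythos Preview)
Your argument is correct. The paper states this corollary without proof, treating it as an immediate consequence of \propref{prop:v_u_duality}; your proposal simply supplies the standard details. The first identity is exactly the Hahn--Banach norming argument combined with the isometry $T$, as you say. For the second identity, one could alternatively argue abstractly: the isometry gives $\|v\|_{V^p}=\sup_{\|u\|_{U^{p'}}\le1}|B(u,v)|$, and since the closed unit ball of $U^{p'}$ is the closed absolutely convex hull of the $U^{p'}$-atoms while $B(\cdot,v)$ is linear and continuous, the supremum is already attained on atoms. Your explicit construction with the H\"older extremizer and the dummy-interval shift is an equally valid, more hands-on route, and your handling of the $\phi_0=0$ constraint and of the identity $B(a,v)=B_\Tt(a,v)$ for step functions $a$ is accurate.
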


\begin{prop}
\label{prop:bilin_form_int}
  Let $1 < p < \infty$. If the distributional derivative of $u$ is in $L^1$
  and $v\in V^{p}$. Then,
  \begin{equation*}
  \label{eqn:bilin_form_int}
    B(u,v) = -\int_{-\infty}^{\infty} \bigl\la u'(t), v(t) \bigr\ra \d{t}.
  \end{equation*}
\end{prop}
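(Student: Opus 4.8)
The plan is to realise $B(u,v)$ as a Riemann--Stieltjes limit of the finite sums $B_\Tt(u,v)$, to integrate by parts on each such sum via Abel summation, and then to let the mesh of the partition shrink. I begin by checking that both sides are meaningful. Since $B(u,v)$ is defined, $u$ lies in the space $U^{p'}$ dual-paired with $V^p$, hence by \propref{prop:v} in $V_{rc}^{p'}$, so that $u(-\infty)=0$; the hypothesis that the distributional derivative $u'$ lie in $L^1(\R,L^2)$ then makes $u$ absolutely continuous with $u(t)=\int_{-\infty}^t u'(s)\,\d{s}$. For $v\in V^p$, the two-point partition $\{t,\infty\}$ together with the convention $v(\infty)=0$ gives $\|v(t)\|_{L^2}=\|v(\infty)-v(t)\|_{L^2}\leq\|v\|_{V^p}$, so $v$ is bounded and the integral $\int_\R\la u'(s),v(s)\ra\,\d{s}$ converges absolutely, dominated by $\|u'(\cdot)\|_{L^2}\|v\|_{V^p}\in L^1(\R)$.

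For a partition $\Tt=\{t_k\}_{k=0}^K\in\Zz$ with $t_K=\infty$ I would apply Abel summation to
\[
  B_\Tt(u,v)=\sum_{k=1}^K\bigl\la u(t_{k-1}),v(t_k)-v(t_{k-1})\bigr\ra,
\]
obtaining
\[
  B_\Tt(u,v)=\bigl\la u(t_{K-1}),v(t_K)\bigr\ra-\bigl\la u(t_0),v(t_0)\bigr\ra-\sum_{k=1}^{K-1}\bigl\la u(t_k)-u(t_{k-1}),v(t_k)\bigr\ra.
\]
Inserting $u(t_k)-u(t_{k-1})=\int_{t_{k-1}}^{t_k}u'(s)\,\d{s}$ rewrites the last sum as $\int_{t_0}^{t_{K-1}}\la u'(s),v_\Tt(s)\ra\,\d{s}$, where $v_\Tt$ denotes the step function taking the value $v(t_k)$ on $[t_{k-1},t_k)$.

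Now I would let the mesh of $\Tt$ tend to $0$ while $t_0\to-\infty$. The two boundary terms vanish in the limit: the first because $v(t_K)=v(\infty)=0$, the second because $u(t_0)\to u(-\infty)=0$ while $v$ remains bounded. For the remaining integral, as the mesh shrinks one has $v_\Tt(s)\to v(s)$ at every point of right-continuity of $v$, and a function of bounded $p$-variation is continuous off an at most countable, hence Lebesgue-null, set; dominated convergence with the dominating function $\|u'(\cdot)\|_{L^2}\|v\|_{V^p}$ then yields $\int_{t_0}^{t_{K-1}}\la u',v_\Tt\ra\,\d{s}\to\int_\R\la u',v\ra\,\d{s}$. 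Hence $B_\Tt(u,v)\to-\int_\R\la u'(s),v(s)\ra\,\d{s}$ as the mesh tends to $0$. Since convergence as the mesh tends to $0$ entails convergence along refinements to the same value --- any refinement of a sufficiently fine partition is itself at least as fine --- this limit must equal the number $B(u,v)$ characterised in \propref{prop:bilin_from}, which is the assertion.

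The step I expect to be the main obstacle is precisely the convergence $\int\la u',v_\Tt\ra\to\int\la u',v\ra$. Because $v$ is only of bounded $p$-variation, the step functions $v_\Tt$ need not approach $v$ uniformly and can fail to detect the jumps of $v$; the resolution is that the jump set is Lebesgue-null and is integrated against the absolutely continuous measure $u'(s)\,\d{s}$, so it contributes nothing to the limit. Turning this heuristic into the dominated-convergence argument above, together with the (routine but necessary) passage from the mesh limit to the refinement limit defining $B$, is where the care of the proof lies.
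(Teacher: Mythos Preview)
The paper does not supply its own proof of this proposition: the exposition of $U^p$ and $V^p$ in \secref{sect:funct_spaces} is quoted from \cite{HHK09}, and the reader is referred there for all proofs. Your argument---Abel summation on $B_\Tt(u,v)$, vanishing of the boundary terms, and dominated convergence using that $v$ is bounded and continuous off a countable set---is exactly the standard route taken in \cite{HHK09}, and it is correct.

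Two small points worth tightening. First, in addition to $t_0\to-\infty$ you also need $t_{K-1}\to\infty$, so that the integration domain $[t_0,t_{K-1}]$ exhausts $\R$; this is implicit in ``mesh $\to0$'' but should be said, since with $t_K=\infty$ the last subinterval always has infinite length. Second, the passage from your mesh limit to the refinement-characterisation of $B(u,v)$ in \propref{prop:bilin_from} is cleanest if made explicit: given $\varepsilon>0$, take the partition $\Tt_0$ from \propref{prop:bilin_from} and apply your argument to the common refinements $\Tt_n\cup\Tt_0$, which are simultaneously refinements of $\Tt_0$ and satisfy your mesh and endpoint conditions. Your one-line justification (``any refinement of a sufficiently fine partition is itself at least as fine'') gestures at this but does not quite say it.
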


Following Bourgain's strategy for the Fourier restriction spaces we adapt the
$U^p$ and $V^p$ space to the gKdV equation.
\begin{definition}
\label{def:u_v_kdv}
  Define the Airy group $S: C(\R,L^2)\to C(\R,L^2)$ as
  \begin{equation*}
  \label{eqn:airy_group}
    S(t) := e^{-t\p_x^3} = \F_x^{-1}e^{-it\xi^3}\F_x,
  \end{equation*}
  where $\F_x$ denotes the Fourier transform with respect to $x$.
  For $u\in C(\R,L^2)$ we set $v(t):=S(-t)u(t)$ and define
  \begin{equation*}
    \Ukdv^p := SU^p\quad\text{and}\quad\Vkdv^p := SV_{rc}^p,
  \end{equation*}
  with norms
  \begin{equation*}
  \label{eqn:u_v_kdv_norms}
    \|u\|_{\Ukdv^p} = \|v\|_{U^p}
    \quad\text{and}\quad
    \|u\|_{\Vkdv^p} = \|v\|_{V^p}.
  \end{equation*}
  Again, we define a
  bilinear map $\Bkdv$ such that for $u\in \Ukdv^p$, $v\in \Vkdv^{p'}$,
  we have for a function $u$ with $(\p_t+\p_x^3)u\in L^1L^2$
  \begin{equation*}
  \label{eqn:bilin_map_kdv}
    \Bkdv(u,v)=-\int \bigl\la (\p_t+\p_x^3)u,v \bigr\ra \d{t}.
  \end{equation*}
  By this bilinear map, we obtain similar duality statements as in
  \corref{cor:duality_u_v}.
\end{definition}

For minor technical purposes we use a slight unusual Littlewood-Paley
decomposition, using powers of $1.01$ instead of $2$ (cf.\ \cite{KM12,TAO07}): 
We fix a nonnegative, even function $\phi\in C_0^\infty((-2,2))$ with
$\phi(s)=1$ for $|s|\leq1$. We use this function to define a partition
of unity: for $\l\in1.01^{\Z}$, we set
\[
   \Psi_\l(\xi) = \phi\left(\frac{|\xi|}{\l}\right)
     - \phi\left(\frac{1.01|\xi|}{\l}\right).
\]
We define the Littlewood-Paley operators $P_\l:L^2(\R)\to L^2(\R)$ as the
Fourier multiplier with symbol $\Psi_\l$. For brevity we write
$u_\l:=P_\l u$. Furthermore, we define
\[
   u_{\leq\l}:=P_{\leq\l}u:=\sum_{\substack{\mu\leq\l\\\mu\in1.01^{\Z}}} P_{\mu}u
   \quad\text{and}\quad
   u_{<\l}:=P_{<\l}u:=(P_{\leq\l}-P_{\l})u.
\]
\begin{definition}
\label{def:besov}
  For $s\in\R$, we define the homogeneous Besov spaces $\dot
  B_{\infty}^{s,2}$ as the set of all tempered distributions on $\R^n$
  for which the norm
\[
   \|v\|_{\dot B_{\infty}^{s,2}}=\sup_{\l\in1.01^\Z}\l^s\|v_\l\|_{L^2_x}
\]
  is finite.
\end{definition}
We pic up the homogeneous space $\dot X^s$ that was defined in
\cite{KM12}.
\begin{definition}
\label{def:xt}
  For $s\in\R$, we define the real-valued homogeneous space $\dot X^s$ using the
  norm
\[
  \|v\|_{\dot X^s} = \sup_{\l\in1.01^{\Z}} \l^s \|v_\l\|_{\Vkdv^2}.
\]
  Furthermore, we denote by $\dot X_{T}^s$ the functions on the
  time space set $(0,T)\times\R$.
\end{definition}

The following estimate follows directly from the definition of the
spaces $\Ukdv^p$ and $\Vkdv^p$.

\begin{lemma}
\label{lem:airy_est_XT}
  Let $v$ be a solution to the Airy equation
  \begin{equation*}
  \label{eqn:airy_est_XT_eqn}
  \left\{
    \begin{array}{rcl}
      \p_t v+\p_{xxx}v & = & f, \\
      v(0,x) & = & v_0(x),
    \end{array}
  \right.
  \end{equation*}
  then, for $s\in\R$, there exists $\kappa_1>0$ such that the
  following estimate holds true
  \begin{equation*}
  \label{eqn:airy_est_XT_est}
    \|v\|_{\XTs} \leq \kappa_1 \left( \|v_0\|_{\Bs} + \sup_{\l\in1.01^{\Z}} \l^{s}
      \left\| \int_{0}^t S(t-s) f_\l(s) \d{s} \right\|_{\Ukdv^2} \right).
  \end{equation*}
\end{lemma}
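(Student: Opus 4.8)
The plan is to apply Duhamel's formula at each dyadic frequency and then handle the free evolution and the forced term separately, so that the statement reduces to the very definitions of the spaces $\Vkdv^2$ and $\Ukdv^2$ together with the embedding of \propref{prop:v}. Since the Littlewood--Paley projector $P_\l$ commutes with the Airy group $S$, Duhamel's formula gives, for each $\l\in1.01^{\Z}$,
\[
  v_\l(t) = S(t)(v_0)_\l + \int_0^t S(t-s)f_\l(s)\,\d{s}.
\]
By the triangle inequality for $\|\cdot\|_{\Vkdv^2}$ it then suffices to bound the weighted $\Vkdv^2$-norm of each summand and take the supremum over $\l$.

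First I would treat the free part. By the definition of the $\Vkdv^2$-norm,
\[
  \|S(\cdot)(v_0)_\l\|_{\Vkdv^2} = \|S(-\cdot)S(\cdot)(v_0)_\l\|_{V^2} = \|(v_0)_\l\|_{V^2},
\]
where in the last expression $(v_0)_\l$ is viewed as the function of $t$ that is constantly equal to $(v_0)_\l$. Computing the $V^2$-norm of such a constant function against the convention $v(\infty)=0$ from \defnref{def:v}, every interior difference vanishes and only the partition endpoint $t_K=\infty$ contributes, so that $\|(v_0)_\l\|_{V^2}=\|(v_0)_\l\|_{L^2}$. Multiplying by $\l^s$ and taking the supremum over $\l$ reproduces exactly $\|v_0\|_{\Bs}$.

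For the forced part, write $w_\l(t):=\int_0^t S(t-s)f_\l(s)\,\d{s}$. Transporting the continuous embedding $U^2\subset V^2_{rc}$ from \propref{prop:v} through the definition of the KdV-adapted norms yields $\|w_\l\|_{\Vkdv^2}\ls\|w_\l\|_{\Ukdv^2}$ with an absolute constant, and weighting by $\l^s$ and taking the supremum over $\l$ produces the Duhamel term appearing on the right-hand side. Combining the two contributions via the triangle inequality and choosing $\kappa_1$ large enough to absorb the embedding constant gives the desired estimate on the full line; the passage to the time-restricted space $\XTs$ is then routine, obtained by extending $f$ by zero outside $(0,T)$ and invoking the restriction norm, the Duhamel integral only seeing $f$ on $(0,t)\subset(0,T)$.

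I do not anticipate a genuine obstacle: the lemma is essentially bookkeeping once the two building blocks are isolated. The only points demanding a little care are the correct treatment of the $v(\infty)=0$ convention in the free-part computation (which is precisely what turns a $V^2$-norm of a constant into an $L^2$-norm) and checking that the restriction to $(0,T)$ leaves the right-hand side unchanged.
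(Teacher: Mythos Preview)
Your proposal is correct and follows exactly the route the paper indicates: the paper does not give a separate proof but simply states that the estimate ``follows directly from the definition of the spaces $\Ukdv^p$ and $\Vkdv^p$'', and your argument via Duhamel, the $V^2$-norm of a constant-in-time function (using the $v(\infty)=0$ convention), and the embedding $U^2\subset V^2_{rc}$ of \propref{prop:v} is precisely such a direct verification. The only care point you flag---handling the restriction to $(0,T)$ so that the free part actually sits in $V^2_{rc}$---is indeed the one nontrivial bookkeeping step, and your treatment of it (extending $f$ by zero and using the restriction norm) is the standard fix.
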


\section{Linear and bilinear estimates}
\label{sec:wp_estimates_u_v}
\noindent
The following Lemma is based on \cite{KPV93} and may be found in
\cite[formula (3.2) and (7.7)]{KM12}.
\begin{lemma}[Strichartz' estimates]
\label{lem:strichartz}
  Let $u\in \Ukdv^q$, $q>4$ and $(q,r)$ be a Strichartz pair of the
  Airy equation, i.e.\ $\frac2q+\frac1r=\frac12$. Then,
  \begin{equation}
  \label{eqn:strichartz_est_up}
     \|u\|_{\L{q}{r}} \ls \bigl\| |D_x|^{-\frac1q}u \bigr\|_{\Ukdv^q}.
  \end{equation}
  In particular, for $\l\in1.01^{\Z}$ we have
  \begin{align*}
  \label{eqn:strichartz_est_6}
    \|u_\l\|_{\L{q}{r}} &\ls \l^{-\frac1q}\|u_\l\|_{\Ukdv^q}
      \ls \l^{-\frac1q}\|u_\l\|_{\Vkdv^2}.
  \end{align*}
  Hence for $s\in\R$ it holds that
  \begin{equation*}
  \label{eqn:strichartz_est_6_1}
    \sup_{\l\in1.01^{\Z}} \l^{\frac1q+s} \|u_\l\|_{\L{q}{r}} \leq \kappa_0 \|u\|_{\XTs}.
  \end{equation*}
\end{lemma}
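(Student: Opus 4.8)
The plan is to deduce \eqref{eqn:strichartz_est_up} from the classical \emph{linear} Strichartz estimate for a single free Airy evolution, upgraded to $\Ukdv^q$ by the atomic structure of the $U^q$ spaces. Writing $u=S(\cdot)v_0$ with $v_0\in U^q$ and commuting the Fourier multiplier $|D_x|^{-\frac1q}$ through the group $S$, one has $u=|D_x|^{\frac1q}S(\cdot)\tilde v$ where $\tilde v:=|D_x|^{-\frac1q}v_0$ is the $U^q$-representative of $|D_x|^{-\frac1q}u$, and $\bigl\||D_x|^{-\frac1q}u\bigr\|_{\Ukdv^q}=\|\tilde v\|_{U^q}$. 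Hence \eqref{eqn:strichartz_est_up} is equivalent to the transference bound
\[
   \bigl\| |D_x|^{\frac1q}S(\cdot)\tilde v \bigr\|_{\L{q}{r}} \ls \|\tilde v\|_{U^q},
\]
and the whole task reduces to controlling the operator $T:=|D_x|^{\frac1q}S(\cdot)$ on $U^q$.

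First I would record the free-solution input: by the oscillatory integral estimates of Kenig, Ponce and Vega \cite{KPV93} (see also \cite[(3.2),(7.7)]{KM12}), for $\frac2q+\frac1r=\frac12$ with $q>4$ one has $\bigl\| |D_x|^{\frac1q} S(t)\phi \bigr\|_{\L{q}{r}} \ls \|\phi\|_{L^2_x}$. To pass to $U^q$ it suffices to bound $T$ on atoms. For a $U^q$-atom $a=\sum_{k}\chi_{[t_{k-1},t_k)}\phi_{k-1}$ (see \defnref{def:u}), the function $S(t)a$ agrees on each interval $[t_{k-1},t_k)$ with the free solution $S(t)\phi_{k-1}$; enlarging each time interval to $\R$ and using the atom normalization $\sum_k\|\phi_{k-1}\|_{L^2}^q=1$ gives
\[
   \|Ta\|_{\L{q}{r}}^q = \sum_k \int_{t_{k-1}}^{t_k}\bigl\| |D_x|^{\frac1q}S(t)\phi_{k-1}\bigr\|_{L^r_x}^q\,dt
   \ls \sum_k \|\phi_{k-1}\|_{L^2}^q = 1.
\]
Decomposing a general $\tilde v=\sum_j\l_j a_j$, applying the triangle inequality in $\L{q}{r}$, and passing to the infimum over atomic decompositions then yields the displayed transference bound, hence \eqref{eqn:strichartz_est_up}.

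For the frequency-localized consequences I would take $u=u_\l$. On the frequency support of $u_\l$ the symbol $(\l/|\xi|)^{\frac1q}$ of $\l^{\frac1q}|D_x|^{-\frac1q}$ is bounded above and below uniformly in $\l$; since a Fourier multiplier bounded in $L^\infty$ maps each $L^2$-component of a $U^q$-atom to an $L^2$ function of comparable norm, it acts on $\Ukdv^q$ with norm $\ls\|m\|_{L^\infty}$, so $\bigl\||D_x|^{-\frac1q}u_\l\bigr\|_{\Ukdv^q}\ls\l^{-\frac1q}\|u_\l\|_{\Ukdv^q}$. Combined with \eqref{eqn:strichartz_est_up} this gives the first displayed inequality, and the second follows from the KdV-adapted embedding $\Vkdv^2\subset\Ukdv^q$ (valid as $q>4>2$), the transferred form of $V_{rc}^2\subset U^q$ in \propref{prop:v}. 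Finally, multiplying by $\l^{\frac1q+s}$, taking the supremum over $\l\in1.01^{\Z}$, and recalling $\|u\|_{\XTs}=\sup_\l\l^s\|u_\l\|_{\Vkdv^2}$ from \defnref{def:xt} produces the last estimate with $\kappa_0$ the accumulated implicit constant; restricting all spacetime norms to $(0,T)\times\R$ only decreases them, so the bound holds on the time slab as well.

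The main obstacle is the transference step, i.e.\ verifying that the single-solution estimate genuinely suffices: one must check that the interval-wise splitting of an atom preserves $\ell^q$-summability, which is precisely why the temporal Strichartz exponent must match the atomic exponent $p=q$, and that Fourier multipliers act on the $U^q$ and $V^2$ spaces with $\l$-uniform bounds via their action on atoms. The linear estimate itself is classical and is simply quoted.
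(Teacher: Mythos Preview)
Your argument is correct and is precisely the standard transference-to-atoms proof that the paper defers to the references: the paper gives no proof of its own but simply cites \cite{KPV93} and \cite[(3.2),(7.7)]{KM12}, and what you have written is the detailed mechanism behind those citations. The key points---the linear Strichartz input of Kenig--Ponce--Vega, the $\ell^q$-summation over the pieces of a $U^q$-atom (which works exactly because the time exponent matches the atomic exponent), the $\l$-uniform boundedness of the multiplier $\l^{\frac1q}|D_x|^{-\frac1q}$ on frequency-localized pieces, and the embedding $V_{rc}^2\subset U^q$ from \propref{prop:v}---are all handled correctly.
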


\begin{lemma}[{\cite[page 179]{KM12}}]
\label{lem:linfty_est}
  Let $u\in\XT$, $\l\in1.01^\Z$, then we have for all $p\geq5$
\[
   \|u_{\leq\l}\|_{\L{\infty}{\infty}}
   \ls\l^{\frac12-s_p}\|u\|_{\XT}.
\]
\end{lemma}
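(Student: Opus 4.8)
```latex
The plan is to prove the $L^\infty_{t,x}$ bound by summing the individual
Littlewood--Paley pieces $u_\mu$ over all frequencies $\mu\leq\l$ and
controlling each piece by the $\dot X^{s_p}_T$ norm via Bernstein's
inequality. First I would write
\[
   \|u_{\leq\l}\|_{\L{\infty}{\infty}}
   \leq \sum_{\substack{\mu\leq\l\\ \mu\in1.01^{\Z}}}
        \|u_\mu\|_{\L{\infty}{\infty}},
\]
which reduces the estimate to a single frequency block. For each block the
key tool is the Bernstein-type embedding $\|u_\mu\|_{L^\infty_x}\ls
\mu^{1/2}\|u_\mu\|_{L^2_x}$, which holds because $u_\mu$ is frequency
localized to an interval of length comparable to $\mu$. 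Taking the supremum
in time then gives $\|u_\mu\|_{\L{\infty}{\infty}}\ls
\mu^{1/2}\|u_\mu\|_{\L{\infty}{2}}$.

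The next step is to pass from the $L^\infty_tL^2_x$ norm of $u_\mu$ to the
$\Vkdv^2$ norm (and hence to $\dot X^{s_p}_T$). By Proposition~\ref{prop:v},
together with the embedding $V^p\subset L^\infty(\R,L^2)$ built into
Definition~\ref{def:v} and the group invariance in
Definition~\ref{def:u_v_kdv}, one has $\|u_\mu\|_{\L{\infty}{2}}\ls
\|u_\mu\|_{\Vkdv^2}$. Combining this with the Bernstein estimate and the
definition of the $\dot X^{s_p}_T$ norm yields, for each fixed $\mu$,
\[
   \|u_\mu\|_{\L{\infty}{\infty}}
   \ls \mu^{\frac12}\|u_\mu\|_{\Vkdv^2}
   = \mu^{\frac12-s_p}\,\mu^{s_p}\|u_\mu\|_{\Vkdv^2}
   \ls \mu^{\frac12-s_p}\|u\|_{\XT}.
\]

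It then remains to sum the geometric factors $\mu^{\frac12-s_p}$ over
$\mu\leq\l$. Since $p\geq5$ we have $s_p=\frac12-\frac{2}{p-1}<\frac12$, so
the exponent $\frac12-s_p=\frac{2}{p-1}$ is strictly positive; hence the sum
$\sum_{\mu\leq\l,\,\mu\in1.01^{\Z}}\mu^{\frac12-s_p}$ is a convergent
geometric series dominated by its largest term, which is comparable to
$\l^{\frac12-s_p}$. This produces exactly the claimed bound
$\|u_{\leq\l}\|_{\L{\infty}{\infty}}\ls\l^{\frac12-s_p}\|u\|_{\XT}$, with the
implicit constant depending on $p$ through the ratio $\l^{\frac12-s_p}$ of
the geometric series.

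The main obstacle I anticipate is organizational rather than deep: one must
verify that the positivity of $\frac12-s_p$ is genuinely used to make the
frequency sum converge (this is precisely where the hypothesis $p\geq5$
enters, and where the argument would fail at the critical or subcritical
exponents), and one must be careful that the Bernstein factor $\mu^{1/2}$
combines with $\mu^{s_p}$ from the definition of the $\dot X^{s_p}_T$ norm so
that the leftover power is summable. Everything else is a routine chain of
the embeddings already collected in Section~\ref{sect:funct_spaces}.
```
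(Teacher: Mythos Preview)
Your proof is correct and is exactly the argument the paper has in mind: the paper's one-line proof (``Bernstein's inequality and the energy estimate'') unpacks precisely into your triangle-inequality decomposition, the Bernstein bound $\|u_\mu\|_{L^\infty_x}\ls\mu^{1/2}\|u_\mu\|_{L^2_x}$, the embedding $\Vkdv^2\subset L^\infty_tL^2_x$, and the geometric summation in $\mu^{\frac12-s_p}$. One small remark: your closing comment that the hypothesis $p\geq5$ is ``precisely'' what makes the sum converge is not accurate, since $\frac12-s_p=\frac{2}{p-1}>0$ for every $p>1$; the restriction $p\geq5$ is imposed elsewhere in the paper (to have $s_p\geq0$), not by this lemma.
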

\begin{proof}
  This estimate follows directly from Bernstein's inequality and
  the energy estimate.
\end{proof}

The next Corollary immediately follows from interpolating the
$\L{6}{6}$-Strichartz estimate and the $\L{\infty}{\infty}$ estimate. 
\begin{coro}
\label{cor:linear_lp}
  Let $u\in\Vkdv^2$, $\l\in1.01^\Z$ and $q\geq6$, then we have
\[
   \|u_\l\|_{\L{q}{q}}\ls\l^{\frac12-\frac4q}\|u_\l\|_{\Vkdv^2},
\]
  and if $p\geq5$, then we even have for all $q>2(p-1)$
\[
   \|u_{\leq\l}\|_{\L{q}{q}}\ls\l^{\frac12-\frac4q-s_p}\|u_{\leq\l}\|_{\XT}.
\]
\end{coro}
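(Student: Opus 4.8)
The plan is to prove Corollary~\ref{cor:linear_lp} in two parts, exactly as its statement suggests. The first inequality is purely an interpolation statement at a single frequency $\l$, while the second transfers this to the low-frequency projection $u_{\leq\l}$ and rewrites the power of $\l$ in terms of the scaling index $s_p$.

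For the first estimate, I would start from the two endpoint bounds already available. From \lemref{lem:strichartz} with the Strichartz pair $(q,r)=(6,6)$ (which satisfies $\tfrac{2}{6}+\tfrac{1}{6}=\tfrac12$) we have the $\L{6}{6}$ control $\|u_\l\|_{\L{6}{6}}\ls\l^{-1/6}\|u_\l\|_{\Vkdv^2}$, i.e.\ the exponent $\tfrac12-\tfrac46=-\tfrac16$ at $q=6$. From \lemref{lem:linfty_est} we have, at the single frequency $\l$, the energy-type bound $\|u_\l\|_{\L{\infty}{\infty}}\ls\l^{1/2}\|u_\l\|_{\Vkdv^2}$, which is the $q=\infty$ endpoint of the same family (here $\tfrac12-\tfrac4q\to\tfrac12$). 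Interpolating these two $L^q_{t,x}$ bounds for $6\leq q\leq\infty$ gives a frequency weight $\l^{\frac12-\frac4q}$, since the exponent $\frac12-\frac4q$ is precisely the linear interpolant in $\tfrac1q$ between $-\tfrac16$ at $q=6$ and $\tfrac12$ at $q=\infty$. This yields the first claimed inequality.

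For the second estimate I would sum the first one over dyadic pieces. Writing $u_{\leq\l}=\sum_{\mu\leq\l}u_\mu$ and applying the triangle inequality in $\L{q}{q}$ together with the single-frequency bound, one gets $\|u_{\leq\l}\|_{\L{q}{q}}\ls\sum_{\mu\leq\l}\mu^{\frac12-\frac4q}\|u_\mu\|_{\Vkdv^2}$. Inserting the definition of $\|\cdot\|_{\XT}$, namely $\mu^{s_p}\|u_\mu\|_{\Vkdv^2}\leq\|u\|_{\XT}$, we can bound $\|u_\mu\|_{\Vkdv^2}\leq\mu^{-s_p}\|u\|_{\XT}$ and reduce to estimating the geometric-type sum $\sum_{\mu\leq\l}\mu^{\frac12-\frac4q-s_p}$. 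This sum converges (and is dominated by its largest term $\l^{\frac12-\frac4q-s_p}$ up to a constant) exactly when the exponent $\frac12-\frac4q-s_p$ is positive; substituting $s_p=\frac12-\frac{2}{p-1}$ shows $\frac12-\frac4q-s_p=\frac{2}{p-1}-\frac4q$, which is positive precisely when $q>2(p-1)$. This is the source of the hypothesis $q>2(p-1)$ and gives the desired power $\l^{\frac12-\frac4q-s_p}$.

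I expect the main technical point to be the summability argument in the second estimate: one must verify that the condition $q>2(p-1)$ makes the exponent of $\mu$ strictly positive so that the geometric series over $\mu\in 1.01^{\Z}$, $\mu\leq\l$, is controlled by its top frequency $\l$ with a constant independent of $\l$. The first estimate, by contrast, is a routine Riesz–Thorin / $L^q_{t,x}$ interpolation between the two endpoints and should present no difficulty beyond tracking the exponent $\frac12-\frac4q$. One should also note that the single-frequency bound requires only $p\geq5$ implicitly through \lemref{lem:linfty_est}, while the condition $q\geq6$ in the first part guarantees $q$ lies in the interpolation range between the $\L{6}{6}$ and $\L{\infty}{\infty}$ endpoints.
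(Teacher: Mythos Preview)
Your proposal is correct and follows essentially the same approach as the paper, which merely states that the corollary ``immediately follows from interpolating the $\L{6}{6}$-Strichartz estimate and the $\L{\infty}{\infty}$ estimate.'' Your elaboration of the interpolation exponent $\tfrac12-\tfrac4q$ and of the summability condition $q>2(p-1)$ is exactly the intended argument; the only cosmetic difference is that the paper writes $\|u_{\leq\l}\|_{\XT}$ on the right-hand side of the second bound, which your dyadic summation yields just as well since $\mu^{s_p}\|u_\mu\|_{\Vkdv^2}\leq\|u_{\leq\l}\|_{\XT}$ for $\mu\leq\l$.
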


The following bilinear estimate is based on a bilinear estimate of
Gr\"unrock \cite{GRU05} and can be found in \cite[formula (7.8)]{KM12}.
\begin{lemma}[Bilinear estimate]
\label{lem:bilinear}
  Let $u,v\in \Ukdv^2$ and let $\l,\mu\in1.01^\Z$ such that $\l\geq1.1\mu$. Then
  \begin{equation*}
  \label{eqn:bilinear_est}
    \|v_\mu u_\l\|_{\L{2}{2}}
    \lesssim \l^{-1}\|v_\mu\|_{\Ukdv^2}\|u_\l\|_{\Ukdv^2}.
  \end{equation*}
\end{lemma}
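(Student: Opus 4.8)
The plan is to reduce the estimate to its analogue for free Airy solutions and then transfer the result to the $\Ukdv^2$ spaces by exploiting their atomic structure. First I would establish the core bilinear estimate for free solutions: if $f,g\in L^2(\R)$ have Fourier supports in $\{|\xi|\sim\l\}$ and $\{|\xi|\sim\mu\}$ respectively, with $\l\geq1.1\mu$, then
\begin{equation*}
  \|S(\cdot)f\cdot S(\cdot)g\|_{\L{2}{2}} \ls \l^{-1}\|f\|_{L^2}\|g\|_{L^2}.
\end{equation*}
To prove this I would compute the space-time Fourier transform of the product $S(t)f\cdot S(t)g$, which is supported on the set parametrized by $(\xi_1,\xi_2)\mapsto(\xi_1+\xi_2,\,\xi_1^3+\xi_2^3)$, and apply Plancherel. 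The decisive point is the lower bound on the Jacobian
\[
  \left|\frac{\partial(\xi_1+\xi_2,\,\xi_1^3+\xi_2^3)}{\partial(\xi_1,\xi_2)}\right|
  = 3\,|\xi_1-\xi_2|\,|\xi_1+\xi_2| \gtrsim \l^2,
\]
which holds precisely because the frequency separation $\l\geq1.1\mu$ forces both $|\xi_1-\xi_2|$ and $|\xi_1+\xi_2|$ to be comparable to $\l$. A change of variables back to $(\xi_1,\xi_2)$, together with the fact that there are only $O(1)$ preimages of a generic $(\tau,\xi)$, then produces the factor $\l^{-2}$ (hence $\l^{-1}$ after the square root).

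Next I would transfer this bound to $\Ukdv^2$ using the atomic definition of $U^2$ (\defnref{def:u}) and the transference principle of \cite{HHK09}. Since $P_\l$ and $P_\mu$ commute with the Airy group $S$, it suffices to prove the estimate when $u_\l$ and $v_\mu$ are frequency-localizations of $\Ukdv^2$-atoms. On each interval of the common refinement of the two defining partitions, both factors are free Airy solutions, so the core estimate applies piece by piece. Because the pieces are supported on disjoint time intervals, the $\L{2}{2}$-norm squared splits as the sum of squares over these pieces; bounding each piece by its free counterpart on all of $\R$ and invoking the atom normalizations $\sum_k\|\phi_k\|_{L^2}^2=1$ gives the bound $\l^{-1}$ for a pair of atoms. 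Finally, expanding general $u,v\in\Ukdv^2$ into their atomic decompositions and summing with the triangle inequality yields the stated estimate with $\|u_\l\|_{\Ukdv^2}\|v_\mu\|_{\Ukdv^2}$ on the right.

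The main obstacle is the free-solution estimate, and within it the verification that the Jacobian is bounded below by $\l^2$ rather than by a merely positive quantity: one must examine both the same-sign and opposite-sign configurations of $\xi_1,\xi_2$ and confirm that the separation threshold $1.1$ guarantees $\bigl|\,|\xi_1|-|\xi_2|\,\bigr|\gtrsim\l$ in every case. The transference step is comparatively routine, but some care is needed to ensure that replacing the free estimate on each sub-interval by the whole-line estimate does not destroy the atom normalization when one sums over the (possibly numerous) intervals of the common refinement; here the orthogonality of the pieces in $\L{2}{2}$ and the nonnegativity of all terms allow the double sum to be dominated by the product of the two normalized sums.
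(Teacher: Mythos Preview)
Your proposal is correct and is precisely the standard argument underlying the references the paper invokes. Note that the paper does not supply its own proof of this lemma: it merely records that the estimate ``is based on a bilinear estimate of Gr\"unrock \cite{GRU05} and can be found in \cite[formula (7.8)]{KM12}.'' Your outline---first the free-solution bilinear bound via the space--time Plancherel argument and the Jacobian $3|\xi_1-\xi_2||\xi_1+\xi_2|$, then the transference to $\Ukdv^2$ via the atomic structure as in \cite{HHK09}---is exactly how those sources obtain the result, so there is nothing substantively different to compare.

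One small caution on the Jacobian step: with the paper's specific Littlewood--Paley convention (cutoff $\phi$ supported in $(-2,2)$), the Fourier supports of $P_\l$ and $P_\mu$ are somewhat wide, so the inequality $\bigl||\xi_1|-|\xi_2|\bigr|\gtrsim\l$ is not entirely automatic from the bare condition $\l\geq1.1\mu$; in \cite{KM12} the cutoff is adapted to the $1.01$ base so that the supports really are separated. This is a matter of conventions rather than a flaw in your reasoning, and your identification of this verification as ``the main obstacle'' is apt.
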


\begin{coro}
  Let $2< q\leq \infty$ and $\l\geq1.1\mu$. Then for $u,v\in\XT$
\[
   \|v_\mu u_\l\|_{\L{q}{q}}
   \ls \mu^{\frac12-\frac1q-s_p}\l^{\frac12-\frac3q-s_p} \|v\|_{\XT} \|u\|_{\XT}.
\]
  If in addition $p\geq5$, then for all $q>\frac{p-1}{2}$ we may even estimate
\begin{equation}
\label{eqn:bilin_est}
   \|v_{\leq\mu} u_\l\|_{\L{q}{q}}
   \ls \mu^{\frac12-\frac1q-s_p} \l^{\frac12-\frac3q-s_p} \|v\|_{\XT}\|u\|_{\XT}.
\end{equation}
\end{coro}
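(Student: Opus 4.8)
The plan is to prove the first inequality by bilinear interpolation and then obtain the second one for free by summing a geometric series over the low frequencies; I regard the first estimate as the substantial part and the second as a formal consequence. For the first inequality, fix $\lambda\ge1.1\mu$ and view $(u,v)\mapsto v_\mu u_\lambda$ as a bilinear map into $\L{q}{q}$, to be interpolated between two endpoints. The $L^2$ endpoint is exactly \lemref{lem:bilinear}, which supplies the crucial high-low gain $\|v_\mu u_\lambda\|_{\L{2}{2}}\ls\lambda^{-1}\|v_\mu\|_{\Ukdv^2}\|u_\lambda\|_{\Ukdv^2}$; this gain is what distinguishes the claim from naive H\"older, which would only give the weaker exponents $\mu^{\frac12-\frac2q-s_p}\lambda^{\frac12-\frac2q-s_p}$. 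The $L^\infty$ endpoint follows from a pointwise H\"older together with Bernstein's inequality and the energy bound (as in the proof of \lemref{lem:linfty_est}): since the relevant frequency supports have size $\sim\mu$ and $\sim\lambda$, one has $\|v_\mu\|_{\L{\infty}{\infty}}\ls\mu^{1/2}\|v_\mu\|_{\L{\infty}{2}}$ and likewise for $u_\lambda$, while $\|v_\mu\|_{\L{\infty}{2}}\ls\|v_\mu\|_{\Ukdv^2}$ by the embedding $\Ukdv^2\subset\L{\infty}{2}$ of \propref{prop:u}. Interpolating with $\theta=2/q$ (so $\frac1q=\frac\theta2$) via $\|f\|_{\L{q}{q}}\le\|f\|_{\L{2}{2}}^{2/q}\|f\|_{\L{\infty}{\infty}}^{1-2/q}$ yields the $U^2$-level bound
\[
   \|v_\mu u_\lambda\|_{\L{q}{q}}\ls\mu^{\frac12-\frac1q}\lambda^{\frac12-\frac3q}\|v_\mu\|_{\Ukdv^2}\|u_\lambda\|_{\Ukdv^2},
\]
and the target exponents then follow from $\|v_\mu\|_{\Vkdv^2}\le\mu^{-s_p}\|v\|_{\XT}$ and $\|u_\lambda\|_{\Vkdv^2}\le\lambda^{-s_p}\|u\|_{\XT}$.

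The one genuine difficulty is that the display above carries $\Ukdv^2$ norms, whereas the $\XT$ norm only controls the weaker $\Vkdv^2$ norm: we have $\Ukdv^2\subset\Vkdv^2$ by \propref{prop:v}, but \emph{not} the reverse. I expect this $\Ukdv^2\to\Vkdv^2$ upgrade to be the main obstacle, since the bilinear gain enters only through the $U^2$-based \lemref{lem:bilinear} and cannot be recovered from purely $\Vkdv^2$-controllable (e.g.\ \corref{cor:linear_lp}) inputs. The remedy is the interpolation theory of the $U^p$/$V^p$ spaces: because the output exponent satisfies $q>2$ \emph{strictly}, there is room to interpolate the bilinear map between $\Ukdv^2$ and $\Ukdv^{q_0}$ for some $q_0>2$ and then invoke the embedding $\Vkdv^2\subset\Ukdv^{q_0}$ of \propref{prop:v}, which transfers the bound to $\Vkdv^2$ inputs without logarithmic loss. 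Concretely I would phrase the estimate as a bounded bilinear form and apply the standard interpolation lemma of Hadac--Herr--Koch, giving the first inequality for all $2<q\le\infty$ and all $u,v\in\XT$.

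For the second inequality I would decompose $v_{\leq\mu}=\sum_{\nu\le\mu,\,\nu\in1.01^\Z}v_\nu$ and apply the first inequality to each product $v_\nu u_\lambda$, which is legitimate because $\lambda\ge1.1\mu\ge1.1\nu$. The triangle inequality then gives
\[
   \|v_{\leq\mu}u_\lambda\|_{\L{q}{q}}\ls\lambda^{\frac12-\frac3q-s_p}\|v\|_{\XT}\|u\|_{\XT}\sum_{\substack{\nu\le\mu\\\nu\in1.01^\Z}}\nu^{\frac12-\frac1q-s_p}.
\]
Since $\frac12-\frac1q-s_p=\frac{2}{p-1}-\frac1q$, the exponent is positive precisely when $q>\frac{p-1}{2}$, which is the standing hypothesis; hence the geometric series converges and is dominated by its largest term $\mu^{\frac12-\frac1q-s_p}$, producing exactly the claimed bound. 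The constraint $q>\frac{p-1}{2}$ is thus seen to be the sharp threshold for summability, and since $p\ge5$ forces $\frac{p-1}{2}\ge2$, this range sits safely inside the $2<q$ regime where the first inequality was established.
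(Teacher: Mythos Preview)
Your proposal is correct and follows essentially the same route as the paper: interpolate the $L^2$ bilinear estimate of \lemref{lem:bilinear} against the $L^\infty$ bound coming from Bernstein plus the energy estimate (this is exactly \lemref{lem:linfty_est}), and then invoke the embedding $\Vkdv^2\subset\Ukdv^{q_0}$ from \propref{prop:v}\ref{it:v_emb3} to pass from $\Ukdv^2$ inputs to $\Vkdv^2$ inputs, which is precisely what the paper means by ``as well as \propref{prop:v}''. Your treatment of the second inequality via Littlewood--Paley decomposition and summation of the geometric series, with the observation that $\tfrac12-\tfrac1q-s_p>0$ exactly when $q>\tfrac{p-1}{2}$, is also the paper's argument; you have simply spelled out the $\Ukdv^2\to\Vkdv^2$ transfer more explicitly than the paper does.
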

\begin{proof}
  The first inequality follows by interpolating the bilinear estimate
  (\lemref{lem:bilinear}) and the $\L{\infty}{\infty}$ estimate
  (\lemref{lem:linfty_est}) as well as \propref{prop:v}.
  As a consequence the second inequality
  simply follows from a Littlewood-Paley decomposition of $v_{\leq\mu}$.
  Note that in \eqref{eqn:bilin_est} $q$ is choosen such that the
  exponent of $\mu$ is larger than zero.
\end{proof}
  
\section{Multi-linear estimates}\label{sect:multilin_est}
\noindent
\begin{lemma}
\label{lem:multlin_est}
  Let $5\leq p\in\R$ and $\l_2\leq\ldots\leq\l_5\in1.01^\Z$, $\mu\in1.01^\Z$
  and $1.1\l_5>\mu$. There exists $r>0$ independent of $T$ such that for given
  $v_i,u\in\XT$, $i=0,\ldots,5$, we have for some small
  $\varepsilon>\delta>0$
\begin{multline*}
   \left|
     \int |v_{0,\leq\l_2}|^{p-5}v_{1,\leq\l_2}v_{2,\l_2}\cdots v_{5,\l_5}u_{\mu} \dxdt
   \right| \\
   \leq r \l_2^{\delta}\l_5^{-\varepsilon-s_p} \mu^{-1-\delta+\varepsilon}
       \|v_0\|_{\XT}^{p-5} \prod_{i=1}^5 \|v_i\|_{\XT} \|u_\mu\|_{\Vkdv^2}.
\end{multline*}
Moreover, we may even replace one factor $\|v_i\|_{\XT}$, $i=3,4$, on the
right hand side by
\[
   \sup_{\l\in1.01^{\Z}}\l^{\frac16+s_p}\|v_{i,\l}\|_{L^6([0,T],\R)}.
\]
\end{lemma}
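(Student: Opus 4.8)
The plan is to bound the integral by H\"older's inequality after pairing the dual factor $u_\mu$ with a single input, arranging matters so that the bilinear estimate (\lemref{lem:bilinear}, in the single-frequency form of the unnamed corollary following it) supplies the decisive negative power of the higher of the two paired frequencies, while every remaining factor is absorbed by the linear Strichartz-type bounds of \corref{cor:linear_lp}. Because $s_p$ is the scaling-critical exponent, the various frequency powers will assemble into $\l_2^{\delta}\l_5^{-\varepsilon-s_p}\mu^{-1-\delta+\varepsilon}$ essentially for free; the slack parameters $\delta<\varepsilon$ are the room left over from having to work slightly away from the $L^2$ endpoint. The factor $|v_{0,\leq\l_2}|^{p-5}$ is treated throughout via $\|v_{0,\leq\l_2}\|_{\L{(p-5)a}{(p-5)a}}^{p-5}$, so that the non-integer exponent $p-5$ enters only through a single H\"older weight.

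I would first dispose of the generic case $\mu\geq1.1\l_2$, where $u_\mu$ is genuinely higher in frequency than the lowest input. Pairing $v_{2,\l_2}$ with $u_\mu$ and using the single-frequency bilinear estimate --- derived exactly as that corollary but retaining $\|u_\mu\|_{\Vkdv^2}$ rather than $\|u\|_{\XT}$ --- gives, for any $q_\ast>2$,
\[
   \|v_{2,\l_2}\,u_\mu\|_{\L{q_\ast}{q_\ast}}
   \ls \l_2^{\frac12-\frac{1}{q_\ast}-s_p}\,\mu^{\frac12-\frac{3}{q_\ast}}\,
       \|v_2\|_{\XT}\,\|u_\mu\|_{\Vkdv^2}.
\]
The essential technical point appears here: \lemref{lem:bilinear} is stated in $\Ukdv^2$, but $\XT$ is built on $\Vkdv^2$ and $\Vkdv^2\not\subset\Ukdv^2$, so the embedding $\Vkdv^2\subset\Ukdv^{q_\ast}$ of \propref{prop:v} forces $q_\ast>2$ strictly. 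Since $\varepsilon>\delta$, the target $\mu$-exponent $-1-\delta+\varepsilon$ lies just above $-1$, and this is exactly the range reached by $\tfrac12-\tfrac{3}{q_\ast}$ with $q_\ast>2$: choosing $q_\ast=3/(\tfrac32+\delta-\varepsilon)$ hits $\mu^{-1-\delta+\varepsilon}$ on the nose, with no monotonicity trade needed. The remaining H\"older budget is then spent by placing $v_{5,\l_5}$ in $\L{q_5}{q_5}$ with $q_5$ near $8$ to produce $\l_5^{-\varepsilon-s_p}$, placing $v_{3,\l_3},v_{4,\l_4}$ in $\L{2(p-1)}{2(p-1)}$ so that their frequency powers vanish (as $\tfrac12-s_p=\tfrac{2}{p-1}$), and placing the low-frequency factors $v_{0,\leq\l_2}$ and $v_{1,\leq\l_2}$ in exponents above $2(p-1)$ to generate the leftover $\l_2^{\delta}$; a direct check shows these choices close the budget, tightly at $p=5$ and with room to spare for larger $p$.

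In the complementary case $\mu<1.1\l_2$ there is no gap between $u_\mu$ and the lowest input, so I would split further. If in addition $\l_5$ is comparable to $\l_2$, then all five frequencies and $\mu$ are comparable, no bilinear gain is available, and pure H\"older together with \corref{cor:linear_lp} suffices: the criticality of $s_p$ makes the product of the single-factor powers exactly $\mu^{-1-s_p}$, which dominates the claimed bound. If instead $\l_5\gg\mu$, a non-vanishing integral forces a high$\times$high$\to$low interaction with $\l_4$ comparable to $\l_5$; here I would pair $u_\mu$ with $v_{5,\l_5}$ (now $\l_5\geq1.1\mu$) to gain a power of $\l_5$, and use the second high factor $v_{4,\l_4}\sim\l_5$, placed in $\L66$, to supply additional $\l_5$-decay. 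The monotone trade $\l_5^{-a}\leq\mu^{-a}$ ($a>0$, valid since $\mu\leq\l_5$) then converts the surplus into the required $\mu$-power, the feasibility of the exponent system reducing to the harmless inequality $\tfrac16+s_p\geq\delta$.

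Finally, the ``moreover'' clause is obtained by re-choosing the H\"older exponent of the designated factor $v_i$, $i\in\{3,4\}$, to be exactly $6$ and estimating it directly by $\|v_{i,\l_i}\|_{\L66}$; since $\|v_{i,\l_i}\|_{\L66}\leq\l_i^{-\frac16-s_p}\sup_{\l}\l^{\frac16+s_p}\|v_{i,\l}\|_{\L66}$ and the latter supremum is $\leq\kappa_0\|v_i\|_{\XT}$ by the $\L66$-Strichartz bound of \lemref{lem:strichartz}, this is a genuine sharpening that leaves the frequency bookkeeping intact. I expect the main difficulty to be organizational rather than conceptual: selecting, uniformly in the real parameter $p\geq5$, one family of H\"older exponents that simultaneously respects all three validity ranges --- $q_\ast>2$ for the bilinear estimate, $q_i\geq6$ for the single-frequency Strichartz bounds, and exponent $>2(p-1)$ for the bunched low-frequency factors --- while landing on precisely $\l_2^{\delta}\l_5^{-\varepsilon-s_p}\mu^{-1-\delta+\varepsilon}$ in every frequency configuration, and in particular handling the non-integer power $p-5$ cleanly throughout.
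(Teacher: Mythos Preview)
Your overall strategy---H\"older plus one strategically placed bilinear estimate---is the right one, and your Case~1 (pairing $v_{2,\l_2}$ with $u_\mu$ when $\mu\geq1.1\l_2$) is a legitimate variant. The paper organizes the split differently, on $\l_5$ versus $\l_2$ rather than $\mu$ versus $\l_2$, and when $\l_5\gg\l_2$ it pairs the two separated \emph{input} frequencies $v_{2,\l_2}v_{5,\l_5}$ in the bilinear estimate, always leaving $u_\mu$ in a linear Strichartz norm.

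There is, however, a genuine gap in your Case~2b. You claim that when $\mu<1.1\l_2$ and $\l_5\gg\mu$, ``a non-vanishing integral forces a high$\times$high$\to$low interaction with $\l_4$ comparable to $\l_5$.'' This is true for $p=5$, where the integrand is a product of six frequency-localized pieces, but it fails for $p>5$: the factor $|v_{0,\leq\l_2}|^{p-5}$ is \emph{not} frequency-localized unless $p-5$ is an even integer, since $x\mapsto|x|^{p-5}$ is then not a polynomial and $|v_{0,\leq\l_2}|^{p-5}$ carries arbitrarily high Fourier modes even though $v_{0,\leq\l_2}$ does not. Hence the spatial integral need not vanish when $\l_5$ is the only high input frequency, and you cannot force $\l_4\sim\l_5$. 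Without that extra $\l_5$-decay your bilinear pairing $u_\mu v_{5,\l_5}$ yields only $\mu^{0+}\l_5^{-1-s_p+}$, and after the trade $\l_5^{-a}\leq\mu^{-a}$ a small \emph{positive} power of $\mu$ survives which cannot be absorbed into $\mu^{-1-\delta+\varepsilon}$. The paper sidesteps this entirely: by applying the bilinear estimate to $v_{2,\l_2}v_{5,\l_5}$---two genuinely separated input frequencies, available regardless of where $\mu$ sits---and estimating $|v_{0,\leq\l_2}|^{p-5}$ in $\L{\infty}{\infty}$ via \lemref{lem:linfty_est}, the non-integer power never enters the frequency bookkeeping. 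A smaller issue: your Case~2a (``pure H\"older'') is also too quick at the endpoint $p=5$, where the constraint $q>2(p-1)=8$ on $v_{1,\leq\l_2}$ forces the remaining five factors below $L^6$; the paper handles this by splitting $v_{1,\leq\l_2}=v_{1,\ll\l_2}+v_{1,\sim\l_2}$ and invoking the bilinear estimate once more on the first piece.
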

\begin{proof}
  In order to prove this multi-linear estimate, we distinguish two
  cases. First, we consider the case when all frequencies $\l_i$ are 
  comparable, i.e.\ $\l_5\leq 1.1\l_2$.
  In the second case, we consider the situation if the frequency $\l_5$
  is much greater than $\l_2$, i.e.\ $1.1\l_2 < \l_5$. In this situation,
  we can make use of the strong bilinear estimate.

  \emph{1st case:} $1.1\l_2 \geq \l_5$ \\
  We start by assuming that $p>5$ and consider the integral
\[
   \left|
    \int |v_{0,\leq\l_2}|^{p-5}v_{1,\leq\l_2}v_{2,\l_2}\cdots v_{5,\l_5}u_{\mu} \dxdt
   \right|.
\]
  Let $q=2(p-4)$, then using H\"older's inequality we can estimate
  the integral by
\[
   \|v_{0,\leq\l_2}\|_{\L{\infty}{q}}^{p-5}
   \|v_{1,\leq\l_2}\|_{\L{\infty}{q}}
   \|v_{2,\l_2}\|_{\L{9/2}{18}}
   \|v_{3,\l_3}\|_{\L{6}{6}}\|v_{4,\l_4}\|_{\L{6}{6}}
   \|v_{5,\l_5}\|_{\L{9/2}{18}}\|u_{\mu}\|_{\L{9/2}{18}}.
\]
  By Sobolev embeddings, the energy estimate and the definition of
  $\XT$, we obtain
\begin{align*}
   \|v_{i,\leq\l_2}\|_{\L{\infty}{q}}
   \ls \l_2^{\frac12-\frac1q-s_p}\|v_i\|_{\XT},
   \quad i=0,1.
\end{align*}
  Strichartz' estimates allows to determine
\begin{align*}
   \|v_{i,\l_i}\|_{\L{9/2}{18}}&\ls \l_i^{-\frac29-s_p} \|v_i\|_{\XT},
   \quad i=2,5,\\
   \|v_{i,\l_i}\|_{\L{6}{6}}&\ls \l_i^{-\frac16-s_p} \|v_i\|_{\XT},
   \quad i=3,4,
   \intertext{as well as}
   \|u_{\mu}\|_{\L{9/2}{18}}&\ls \mu^{-\frac29} \|u_{\mu}\|_{\Vkdv^2}.
\end{align*}
  Since $\l_2$ and $\l_5$ are comparable, the product of the terms of
  $\l_i$ can be estimated by a constant times
  $\l_2^{\frac29}\l_5^{-1-s_p}$ for instance.

  If $p=5$, then we split the integral into two terms, namely
\begin{align*}
  I_1+I_2 =
  \left|
    \int v_{1,\ll\l_2}v_{2,\l_2}\cdots v_{5,\l_5}u_{\mu} \dxdt
   \right|
   +\left|
    \int v_{1,\sim\l_2}v_{2,\l_2}\cdots v_{5,\l_5}u_{\mu} \dxdt
   \right|,
\end{align*}
  where $v_{1,\ll\l_2}:=\sum_{\l_1:\;1.1\l_1<\l_2}v_{1,\l_1}$ and
  $v_{1,\sim\l_2}:=v_{1,\leq\l_2}-v_{1,\ll\l_2}$.
  Using  H\"older's inequality we may estimate $I_1$ and obtain
\[
   \|v_{1,\ll\l_2}v_{2,\l_2}\|_{\L{5/2}{5/2}}
   \|v_{3,\l_3}\|_{\L{10}{10}}
   \|v_{4,\l_4}\|_{\L{6}{6}}
   \|v_{5,\l_5}\|_{\L{6}{6}}
   \|u_\mu\|_{\L{6}{6}}.
\]
  Applying the bilinear estimate, \corref{cor:linear_lp} and Strichartz'
  estimates yields
\begin{align*}
  \|v_{1,\ll\l_2}v_{2,\l_2}\|_{\L{5/2}{5/2}}
  &\ls \l_2^{-\frac{3}{5}}\|v_1\|_{\XTN}\|v_2\|_{\XTN},\\
  \|v_{3,\l_3}\|_{\L{10}{10}}
  &\ls \l_3^{\frac{1}{10}}\|v_3\|_{\XTN},\\
  \|v_{i,\l_i}\|_{\L{6}{6}}
  &\ls \l_i^{-\frac16}\|v_i\|_{\XTN},\quad i=4,5,\\
  \|u_\mu\|_{\L{6}{6}}
  &\ls \mu^{-\frac16}\|u_\mu\|_{\Vkdv^2}.
\end{align*}
  $I_2$ simply can be estimated by
\[
   \|v_{1,\sim\l_2}\|_{\L{6}{6}}
   \|v_{2,\l_2}\|_{\L{6}{6}}
   \|v_{3,\l_3}\|_{\L{6}{6}}
   \|v_{4,\l_4}\|_{\L{6}{6}}
   \|v_{5,\l_5}\|_{\L{6}{6}}
   \|u_\mu\|_{\L{6}{6}}.
\]
  using H\"older's inequality, which can be further estimated by Strichartz'
  estimates.
  Since $1.1\l_2\geq\l_5$, the product of the $\l_i$ frequencies can be estimated
  by, e.g., $\l_2^{\frac16}\l_5^{-1}$.

  \emph{2nd case:} $1.1\l_2 < \l_5$ \\
  The main idea in this situation is to use the strong bilinear estimate, which
  allows to bound
\[ 
   \|v_{2,{\l_2}}v_{5,\l_5}\|_{\L{q}{q}}
   \ls\l_2^{\frac12-\frac1q}\l_5^{\frac12-\frac3q}
      \|v_{2}\|_{\XT}\|v_{5}\|_{\XT}
\]
  provided $2<q\leq\infty$.  We define the H\"older exponents
  $q_1=2(p+5)$ and $q_2=2+\frac{2}{p+4}$. Using H\"older's
  inequality, we may bound
\[
   \left|
     \int |v_{0,\leq\l_2}|^{p-5}v_{1,\leq\l_2}v_{2,\l_2}\cdots v_{5,\l_5}u_{\mu} \dxdt
   \right|
\]
  by
\[
   \|v_{0,\leq\l_2}\|_{\L{\infty}{\infty}}^{p-5}
   \|v_{1,\leq\l_2}\|_{\L{q_1}{q_1}}
   \|v_{3,\l_3}\|_{\L{6}{6}}
   \|v_{4,\l_4}\|_{\L{6}{6}}
   \|u_\mu\|_{\L{6}{6}}
   \|v_{2,\l_2}v_{5,\l_5}\|_{\L{q_2}{q_2}}.
\]
  From \lemref{lem:linfty_est} and the definition of $\XT$, we obtain
\[
   \|v_{0,\leq\l_2}\|_{\L{\infty}{\infty}}^{p-5}
   \ls \l_2^{(p-5)(\frac12-s_p)}\|v_0\|_{\XT}^{p-5}.
\]
  \corref{cor:linear_lp} allows to estimate
\[
   \|v_{1,\leq\l_2}\|_{\L{q_1}{q_1}}
   \ls \l_2^{\frac12-\frac{4}{q_1}-s_p}\|v_1\|_{\XT}.
\]
  By Strichartz' estimates, we obtain
\[
   \|v_{i,\l_i}\|_{\L{6}{6}}
   \ls \l_i^{-\frac16-s_p} \|v_i\|_{\XT}
\]
  for $i=3,4$, as well as
\[
   \|u_{\mu}\|_{\L{6}{6}} \ls \mu^{-\frac16} \|u_{\mu}\|_{\Vkdv^2}.
\]
  Furthermore, since $q_2>2$ the bilinear estimate gives
\[ 
   \|v_{2,\l_2}v_{5,\l_5}\|_{\L{q_2}{q_2}}
   \ls\l_2^{\frac12-\frac{1}{q_2}-s_p}\l_5^{\frac12-\frac{3}{q_2}-s_p}\|v_2\|_{\XT}\|v_5\|_{\XT}.
\]
  The product of the $\l_i$ can be estimated by
  $\l_2^{\frac{1}{6}-\frac{3}{2(p+5)}}\l_5^{-1+\frac{3}{2(p+5)}-s_p}$.
  Note that the exponent of $\l_2$ is bigger than zero for all $p\geq5$.
\end{proof}

If we assume that the frequency $\mu$ is much greater than all other
frequencies, then we can even prove the following Lemma.
\begin{lemma}
\label{lem:multlin_est2}
  Let $5< p\in\R$ and $\l_2\leq\ldots\leq\l_5\in1.01^\Z$, $\mu\in1.01^\Z$
  and $1.1\l_5\leq\mu$. There exists $r>0$ independent of $T$ such that for given
  $v_i,u\in\XT$, $i=0,\ldots,5$, we have
\begin{multline*}
   \left|
     \int |v_{0,\leq\l_2}|^{p-5}v_{1,\leq\l_2}v_{2,\leq\l_2}v_{3,\l_3}v_{4,\l_4}v_{5,\l_5}u_{\mu}
     \dxdt
   \right| \\
   \leq r \l_2^{\frac{1}{15}}\l_5^{-\frac16-s_p} \mu^{-\frac{9}{10}}
       \|v_0\|_{\XT}^{p-5} \prod_{i=1}^5 \|v_i\|_{\XT} \|u_\mu\|_{\Vkdv^2}.
\end{multline*}
  Moreover, we may even replace one factor $\|v_i\|_{\XT}$, $i=3,4$, on the
  right hand side by
\[
   \sup_{\l\in1.01^{\Z}}\l^{\frac16+s_p}\|v_{i,\l}\|_{L^6([0,T],\R)}.
\]
\end{lemma}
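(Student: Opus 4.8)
The plan is to follow the scheme of \lemref{lem:multlin_est}, the only structural novelty being that here the output frequency $\mu$ is the largest one. Two facts drive the argument. First, the spatial Fourier support of the product $|v_{0,\leq\l_2}|^{p-5}v_{1,\leq\l_2}v_{2,\leq\l_2}v_{3,\l_3}v_{4,\l_4}v_{5,\l_5}$ is contained in $\{|\xi|\ls_p\l_5\}$, so for the integral not to vanish we must have $\mu\ls_p\l_5$; combined with the hypothesis $1.1\l_5\leq\mu$ this gives $\mu\sim\l_5$, whence powers of $\mu$ and $\l_5$ may be exchanged freely (at the cost of a $p$-dependent constant) and only their combined weight matters. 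Second, the genuine gap $\mu\geq1.1\l_5\geq1.1\l_i$ makes the strong bilinear estimate (the corollary to \lemref{lem:bilinear}, in particular \eqref{eqn:bilin_est} and its analogue for $v_{\l}u_{\mu}$) available for a pair containing the top frequencies.

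First I would apply H\"older's inequality: the low-frequency factors $|v_{0,\leq\l_2}|^{p-5}$, $v_{1,\leq\l_2}$, $v_{2,\leq\l_2}$ go into $L^\infty_{t,x}$ or into finite $\L{q}{q}$ spaces, two of the middle factors $v_{3,\l_3},v_{4,\l_4}$ into $\L{6}{6}$ (which is what permits the replacement in the \emph{Moreover} part), and a high-high pair into the bilinear estimate. These are then bounded by \lemref{lem:linfty_est} and \corref{cor:linear_lp} (low frequencies), by \lemref{lem:strichartz} (the $\L{6}{6}$ factors), and by \eqref{eqn:bilin_est} (the bilinear factor); the $u_\mu$-factor is handled so as to leave $\|u_\mu\|_{\Vkdv^2}$, either directly from \corref{cor:linear_lp} when $u_\mu\in\L{6}{6}$, or via $\|u\|_{\XT}=\mu^{s_p}\|u_\mu\|_{\Vkdv^2}$ when $u_\mu$ enters the bilinear factor, producing the cancellation $\mu^{1/2-3/q_2-s_p}\mu^{s_p}=\mu^{1/2-3/q_2}$. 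Discarding $\l_3,\l_4$ through the ordering $\l_2\leq\l_3\leq\l_4\leq\l_5$ and using $\mu\sim\l_5$ to write the combined $(\l_5,\mu)$-weight in the stated split $\l_5^{-1/6-s_p}\mu^{-9/10}$, one arrives at the claim. A homogeneity check shows the sum of all exponents is fixed, so the exponent of $\l_2$ is forced once that of the $(\l_5,\mu)$-block is; the whole point is to make the $\l_2$-exponent the \emph{positive} number $\tfrac1{15}$, which is what guarantees summability in $\l_2$ later.

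The main obstacle is exactly this positivity. Estimating the single pair $(v_{5,\l_5},u_\mu)$ by \eqref{eqn:bilin_est} at an admissible exponent $q_2>2$ yields a combined $(\l_5,\mu)$-decay that is too weak, by precisely the margin needed to leave a positive power of $\l_2$ (one computes the $\l_2$-exponent to be $\tfrac4{q_2}-2<0$); a single bilinear application therefore cannot suffice, and decay must be extracted from a \emph{second} high-frequency interaction. I would resolve this by a case distinction on the two largest input frequencies. If $1.1\l_4<\l_5$ there is a second gap, so I apply \eqref{eqn:bilin_est} to the pair $(v_{4,\l_4},v_{5,\l_5})$ and place $u_\mu$ in $\L{6}{6}$, afterwards using $\mu\sim\l_5$ to convert the surplus $\l_5$-decay into the required $\mu^{-9/10}$. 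If instead $\l_5\leq1.1\l_4$, the two top frequencies are comparable, so the $\L{6}{6}$-bound $\|v_{4,\l_4}\|_{\L{6}{6}}\ls\l_4^{-1/6-s_p}\|v_4\|_{\XT}\sim\l_5^{-1/6-s_p}\|v_4\|_{\XT}$ already contributes the extra $\l_5$-decay, and I keep the bilinear estimate on $(v_{5,\l_5},u_\mu)$.

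What remains is purely bookkeeping but genuinely $p$-dependent: the H\"older exponents must be chosen so that simultaneously the bilinear exponent obeys $q_2>2$, the exponents feeding \corref{cor:linear_lp} exceed $2(p-1)$ (and the single-frequency Strichartz exponents are $\geq6$), and the resulting exponent of $\l_2$ is at least $\tfrac1{15}$. Since the sign of the intermediate exponent $\tfrac12-\tfrac1{q_2}-s_p$ on the $v_4$-factor changes with $p$ — it decides whether the surplus $\l_4$-power is absorbed into $\l_2$ or into $\l_5$ — the admissible window for $q_2$ depends on $p$, and checking that this window is nonempty for every $5<p\in\R$ is the delicate step anticipated for the non-integer exponents. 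Finally, once $v_3,v_4$ sit in $\L{6}{6}$, the \emph{Moreover} assertion is immediate: the Strichartz step $\|v_{i,\l_i}\|_{\L{6}{6}}\ls\l_i^{-1/6-s_p}\|v_i\|_{\XT}$ may instead be read as $\|v_{i,\l_i}\|_{\L{6}{6}}=\l_i^{-1/6-s_p}\bigl(\l_i^{1/6+s_p}\|v_{i,\l_i}\|_{\L{6}{6}}\bigr)$, so the factor $\|v_i\|_{\XT}$ can be replaced by $\sup_{\l\in1.01^{\Z}}\l^{1/6+s_p}\|v_{i,\l}\|_{L^6([0,T],\R)}$ without changing any power of $\l_i$.
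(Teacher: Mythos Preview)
Your argument rests on the claim that the spatial Fourier support of
$|v_{0,\leq\l_2}|^{p-5}v_{1,\leq\l_2}v_{2,\leq\l_2}v_{3,\l_3}v_{4,\l_4}v_{5,\l_5}$
is contained in $\{|\xi|\ls_p\l_5\}$, hence $\mu\sim_p\l_5$. This is false for the
$p$ under consideration: the map $x\mapsto|x|^{p-5}$ is not a polynomial when
$p-5$ is not an even integer, so applying it pointwise to the frequency-localised
function $v_{0,\leq\l_2}$ destroys the compact Fourier support. There is therefore
no upper bound on $\mu$ in terms of $\l_5$, and your conversion of ``surplus
$\l_5$-decay'' into $\mu^{-9/10}$ breaks down. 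In particular, in your first sub-case
you place $u_\mu$ in $\L{6}{6}$, which yields only $\mu^{-1/6}$; the missing
$\mu^{-9/10+1/6}=\mu^{-11/15}$ cannot be recovered. This non-vanishing for
$\mu\gg\l_5$ is in fact the very reason the lemma is needed: compare the proof of
\lemref{lem:contract}, where $\mathbf{S_2}=0$ for $p=5$ precisely by the frequency
argument you invoke, but not for $p>5$.

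The paper's remedy is to put $u_\mu$ into the bilinear factor, paired not with
$v_{5,\l_5}$ but with $v_{3,\l_3}$. Since $1.1\l_3\leq1.1\l_5\leq\mu$, the bilinear
estimate applies, and with the interpolated exponent $15/7$ one gets
\[
   \|v_{3,\l_3}u_\mu\|_{\L{15/7}{15/7}}
   \ls \l_3^{\frac{1}{30}-s_p}\mu^{-\frac{9}{10}}\|v_3\|_{\XT}\|u_\mu\|_{\Vkdv^2},
\]
which delivers the full $\mu^{-9/10}$ directly, with no relation between $\mu$ and
$\l_5$ required. The factors $v_{4,\l_4},v_{5,\l_5}$ go into $\L{6}{6}$ and the
$p-3$ low-frequency factors into $\L{q}{q}$ with $q=5(p-3)$; this single H\"older
splitting works uniformly for all $p>5$, so no case distinction and no
$p$-dependent window is needed. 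The \emph{Moreover} clause follows by swapping the
roles of $v_3$ and $v_4$.
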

\begin{proof}
  The proof is quite similar to the proof of \lemref{lem:multlin_est}.
  We consider
\[
   \left|
     \int |v_{0,\leq\l_2}|^{p-5}v_{1,\leq\l_2}v_{2,\leq\l_2}v_{3,\l_3}v_{4,\l_4}v_{5,\l_5}u_{\mu}
     \dxdt
   \right|
\]
  and define the H\"older exponent $q=5(p-3)$.
  Using H\"older's inequality we estimate
\[
   \|v_{0,\leq\l_2}\|_{\L{q}{q}}^{p-5}
   \|v_{1,\leq\l_2}\|_{\L{q}{q}}
   \|v_{2,\leq\l_2}\|_{\L{q}{q}}
   \|v_{4,\l_4}\|_{\L{6}{6}}
   \|v_{5,\l_5}\|_{\L{6}{6}}
   \|v_{3,\l_3}u_\mu\|_{\L{15/7}{15/7}}.
\]
  By \corref{cor:linear_lp} and the definition of $\XT$, we obtain
\begin{align*}
   \|v_{i,\leq\l_2}\|_{\L{q}{q}}
   \ls \l_2^{\frac12-\frac4q-s_p}\|v_i\|_{\XT},
   \quad i=0,1,2.
\end{align*}
  Applying Strichartz' estimates yields
\[
   \|v_{i,\l_i}\|_{\L{6}{6}}
   \ls \l_i^{-\frac16-s_p} \|v_i\|_{\XT},
   \quad i=4,5.
\]
  Finally, the bilinear estimate provides
\[ 
   \|v_{3,\l_3}u_\mu\|_{\L{15/7}{15/7}}
   \ls\l_3^{\frac{1}{30}-s_p}\mu^{-\frac{9}{10}}\|v_5\|_{\XT}\|u_\mu\|_{\Vkdv^2}.
\]
  The frequencies can estimated by
  $\l_2^{\frac{1}{15}}\l_5^{-\frac16-s_p}\mu^{-\frac{9}{10}}$.

  Note that we may change the role of $v_{3,\l_3}$ and $v_{4,\l_4}$ in the
  calculation above and hence can also estimate $v_{3,\l_3}$ in $\L{6}{6}$.  
\end{proof}

\section{Proof of the theorem}\label{sect:proof}
\noindent
In this section we are going to prove \thmref{thm:lwp}. The solution
$\psi=v+w$ of \eqref{eq:i_gkdv} is constructed by studying the
following equation
\[
   \left\{
     \begin{array}{rcl}
       \p_tw + \p_{xxx}w + \p_x\bigl(|v+w|^{p-1}(v+w)\bigr) & = & 0, \\
       w(0,x) & = & 0,
     \end{array}
   \right.
\]
where $v$ is a solution to the Airy equation \eqref{eq:i_airy}.

\begin{lemma}
\label{lem:contract}
  Let $W\in\XT$ and furthermore let $r$ and $\kappa_1$ be the constants from
  \lemref{lem:multlin_est} and \lemref{lem:airy_est_XT}, respectively. Under
  the assumptions of \thmref{thm:lwp}, we consider
\begin{equation}
\label{eq:eqn_contr}
   \left\{
     \begin{array}{rcl}
       \p_tw + \p_{xxx}w + \p_x\bigl(|v+W|^{p-1}(v+W)\bigr) & = & 0, \\
       w(0,x) & = & 0.
     \end{array}
   \right.
\end{equation}
  If $w$ solves \eqref{eq:eqn_contr} and $\|W\|_{\XT}\leq\alpha$, then there
  exists some $c>0$ such that for
\[
   \alpha\leq\min\left\{\kappa_1r_0,\frac{1}{2cr\kappa_1^pr_0^{p-1}}\right\}
   \quad\text{and}\quad
   \delta_0\leq\alpha,
\]
  it holds
\[
   \|w\|_{\XT}\leq\alpha.
\]
\end{lemma}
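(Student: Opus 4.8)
The plan is to view $w$ as the Duhamel solution of the inhomogeneous Airy equation with forcing $-\p_x N$, where $N:=|v+W|^{p-1}(v+W)$, and to feed it into the multilinear estimates of \secref{sect:multilin_est}. First I would apply \lemref{lem:airy_est_XT} with $s=s_p$, $f=-\p_x N$ and vanishing initial datum; since $w(0,x)=0$ the boundary term disappears and
\[
   \|w\|_{\XT}\le\kappa_1\sup_{\l\in1.01^\Z}\l^{s_p}\Bigl\|\int_0^t S(t-s)(\p_x N)_\l(s)\,\d s\Bigr\|_{\Ukdv^2}.
\]
To bound a single frequency piece I would use the duality attached to $\Bkdv$ in \defnref{def:u_v_kdv} (the analogue of \corref{cor:duality_u_v}): the $\Ukdv^2$-norm of the Duhamel term equals the supremum over $u_\l$ with $\|u_\l\|_{\Vkdv^2}\le1$ of $\bigl|\int\la(\p_x N)_\l,u_\l\ra\,\dxdt\bigr|$. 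Integrating by parts moves $\p_x$ onto $u_\l$ and costs a factor comparable to $\l$; this is precisely the loss that the factor $\mu^{-1}$ appearing in \lemref{lem:multlin_est} and \lemref{lem:multlin_est2} is built to absorb, since there $\mu\sim\l$.

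Next I would linearise the nonlinearity. For real-valued $v+W$ one has the identity
\[
   N=|v+W|^{p-1}(v+W)=|v+W|^{p-5}(v+W)^5 ,
\]
so after a Littlewood--Paley decomposition of the five explicit factors and of the envelope $|v+W|^{p-5}$, followed by an ordering of the frequencies, the pairing $\int\la N_\l,u_\l\ra\,\dxdt$ decomposes into expressions of exactly the form handled by \lemref{lem:multlin_est} (in the regime $1.1\l_5>\mu$) and \lemref{lem:multlin_est2} (in the regime $1.1\l_5\le\mu$), all with $v_0=\dots=v_5=v+W$. I would then sum the resulting bounds over $\l_2\le\l_3\le\l_4\le\l_5$ with $\mu\sim\l_5$: once the Airy weight $\l^{s_p}$ and the derivative factor $\sim\l$ are accounted for, the gain $\l_2^{\delta}$ against $\l_5^{-\varepsilon-s_p}\mu^{-1-\delta+\varepsilon}$ leaves a net $(\l_2/\l_5)^{\delta}$, which is summable (the two free inner sums over $\l_3,\l_4$ contribute only a logarithmic factor, dominated by the $\delta$-power), so the whole frequency sum is controlled by a constant multiple of $r$.

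It remains to insert the norms and exploit the smallness. I would bound the $p-2$ ``large'' factors by $\|v+W\|_{\XT}\le\|v\|_{\XT}+\|W\|_{\XT}\le\kappa_1 r_0+\alpha\le2\kappa_1 r_0$, using \lemref{lem:airy_est_XT} for $v$ together with $\alpha\le\kappa_1 r_0$. The decisive point is that \emph{two} factors of smallness are available: applying the final clause of \lemref{lem:multlin_est} and \lemref{lem:multlin_est2} to \emph{both} slots $i=3,4$ (the proofs estimate each of these slots in $\L{6}{6}$, so either may be traded for its weighted $L^6$-norm), each such factor becomes
\[
   \sup_{\l\in1.01^\Z}\l^{\frac16+s_p}\|(v+W)_\l\|_{L^6([0,T],\R)}\le\delta_0+\kappa_0\alpha\le(1+\kappa_0)\alpha ,
\]
by the smallness hypothesis \eqref{eqn:smallness_cond} for the $v$-part, \lemref{lem:strichartz} for the $W$-part, and $\delta_0\le\alpha$. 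Collecting everything into the constant $c$ yields $\|w\|_{\XT}\le c\,r\,\kappa_1^{p}r_0^{p-1}\,\alpha^2$, whence the hypothesis $\alpha\le(2cr\kappa_1^p r_0^{p-1})^{-1}$ gives $\|w\|_{\XT}\le\tfrac12\alpha\le\alpha$.

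The step I expect to be most delicate is the reduction in the second and third paragraphs. A single factor of smallness would only give $\|w\|_{\XT}\ls c r\kappa_1^p r_0^{p-1}\alpha$, which does not close the self-map; it is essential to produce the quadratic gain $\alpha^2$, and hence to place the smallness on two of the frequency-ordered factors simultaneously. Moreover, because $p$ is not an integer, $|v+W|^{p-5}$ is a genuinely nonlinear envelope rather than a polynomial, so rewriting it as the low-frequency quantity $|v_{0,\le\l_2}|^{p-5}$ required by the multilinear lemmas relies on the high-frequency part of the envelope being harmless --- which is exactly what the $L^\infty$ bound of \lemref{lem:linfty_est} secures. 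Keeping the (logarithmically many) nested frequency sums convergent while carrying out this bookkeeping is where the argument is most technical.
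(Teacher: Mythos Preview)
Your outline---Duhamel, duality, then \lemref{lem:multlin_est}/\lemref{lem:multlin_est2}, then frequency summation---is the paper's strategy, but two of your steps do not go through as written.

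The first gap is the decomposition of $N=|v+W|^{p-1}(v+W)$. A direct Littlewood--Paley expansion ``of the five explicit factors and of the envelope'' produces products of single-frequency pieces $(v+W)_{\l_1}\cdots(v+W)_{\l_5}$ and does nothing to the nonlinear factor $|v+W|^{p-5}$; this is \emph{not} the form required by \lemref{lem:multlin_est} and \lemref{lem:multlin_est2}, which need genuine low-pass projections $|(v+W)_{\le\l_2}|^{p-5}(v+W)_{\le\l_2}$ in the bottom slots. The paper generates this structure by the iterated telescoping identity $f_p(u)=\sum_{\l}\bigl(f_p(u_{\le\l})-f_p(u_{<\l})\bigr)$ combined with the fundamental theorem of calculus, introducing the interpolants $F_\l^\tau(u)=u_{<\l}+\tau u_\l$ and repeating four times; this is the mechanism that turns the nonlinear envelope into $|\mathbf{u}_{\Ll_2}^{\ttau_2}|^{p-5}$ evaluated at a low-frequency object. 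Your appeal to \lemref{lem:linfty_est} to declare the high-frequency part of the envelope ``harmless'' is not a substitute for this exact identity, and without it the multilinear lemmas cannot be invoked. Relatedly, for the regime $1.1\l_5\le\mu$ you cannot simply integrate by parts: \lemref{lem:multlin_est2} only supplies $\mu^{-9/10}$, not $\mu^{-1}$, so moving $\p_x$ onto $u_\mu$ leaves an unbounded $\mu^{1/10}$. The paper instead differentiates the (partially) decomposed product by Leibniz, so that the derivative contributes a factor $\l_i\le\l_5$ rather than $\mu$, and this is what makes the $\l_5$-sum converge against $\l_5^{-\frac16-s_p}$.

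The second gap is your extraction of two small factors. \lemref{lem:multlin_est} and \lemref{lem:multlin_est2} state that \emph{one} of the norms $\|v_i\|_{\XT}$, $i=3$ \emph{or} $i=4$, may be replaced by the weighted $L^6$ quantity---not both. This is not merely a cautious statement: in the proof of \lemref{lem:multlin_est}, Case~1 with $p=5$ (the term $I_1$), the factor $v_{3,\l_3}$ sits in $L^{10}_{t,x}$, not $L^6_{t,x}$, so the simultaneous replacement is unavailable. The paper accordingly uses only a single small factor: it splits $(v+W)_{\l_4}=v_{\l_4}+W_{\l_4}$, bounds the $v$-piece via the $L^6$ smallness hypothesis ($\delta_0$) and the $W$-piece via $\|W\|_{\XT}\le\alpha$, and controls the remaining $p-1$ factors by $\|v+W\|_{\XT}$. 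The resulting bound is linear, of the form $cr(r_0\kappa_1)^{p-1}(\delta_0+\alpha)$, and the stated smallness conditions on $\alpha$ and $\delta_0$ are invoked directly to close; your proposed quadratic mechanism $cr\kappa_1^p r_0^{p-1}\alpha^2$ is not how the paper argues and is not supported by the lemmas as stated.
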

\begin{proof}
  For $\tau\in\R$ and $\l\in1.01^{\Z}$ we set $F_\l^\tau(u)=u_{<\l}+\tau u_\l$.
  Using that we define for $\ttau=(\tau_1,\ldots,\tau_n)\in\R^n$ and
  $\Ll=(\l_1,\ldots,\l_n)\in\bigl(1.01^{\Z}\bigr)^n$
\[
   \Ff_\Ll^\ttau(u)=F_{\l_1}^{\tau_1}\circ\cdots\circ F_{\l_n}^{\tau_n}(u).
\]
  One easily proves, that for $\ttau=(\tau_1,\ldots,\tau_n)\in[0,1]^n$, 
  $\Ll=(\l_1,\ldots,\l_n)\in(1.01^{\Z})^n$ and $\mu\in1.01^\Z$ we have
\[
   \bigl|\bigl(\Ff_\Ll^\ttau(u)\bigr)_\mu\bigr| \leq 2^n |u_\mu|
   \quad\text{and}\quad
   \bigl\|\Ff_{\Ll}^\ttau(u)\bigr\|_{\XTs} \leq 2^n\|u\|_{\XTs},\quad s\in\R.
\]
  Furthermore, one trivially verifies
\[
   \Ff_{\Ll}^\ttau(u+v) = \Ff_{\Ll}^\ttau(u)+\Ff_{\Ll}^\ttau(v).
\] 
  Set $f_p(x)=|x|^{p-1}x$, then by the telescoping series we have
\[
   f_p(u)=\sum_{\l\in1.01^{\Z}}\bigl( f_p(u_{\leq\l}) - f_p(u_{<\l}) \bigr).
\]
  By a standard trick, using the fundamental theorem of calculus we get
\begin{align*}
   f_p(u)
   &= \sum_{\l\in1.01^{\Z}}\int_0^1
     f_p'\bigl( u_{<\l} + \tau(u_{\leq\l} - u_{<\l}) \bigr)\d{\tau}
     (u_{\leq\l} - u_{<\l})\\
   &= \sum_{\l\in1.01^{\Z}}\int_0^1
     f_p'\bigl( F_\l^\tau(u) \bigr)\d{\tau}u_{\l}
\end{align*}
  In the sequel we use a more compact notation and write
\[
   \mathbf{u}_{\Ll}^{\ttau} := \Ff_\Ll^\ttau(u).
\]
  Reapplying this method three times, we get for $\Ll_i=(\l_i,\ldots,\l_5)$
  and $\ttau_i=(\tau_i,\ldots,\tau_5)$, $i=2,\ldots,5$, 
\[
  f_p(u) = \sum_{\substack{\l_2\leq\ldots\leq\l_5\\\l_i\in1.01^\Z}} \int_{[0,1]^4}
    f_p^{(4)}\bigl(\mathbf{u}_{\Ll_2}^{\ttau_2}\bigr)
    \mathbf{u}_{\Ll_3,\l_2}^{\ttau_3}
    \mathbf{u}_{\Ll_4,\l_3}^{\ttau_4}
    \mathbf{u}_{\Ll_5,\l_4}^{\ttau_5}
    u_{\l_5} \d{\ttau_2}.
\]
  That is in our context
\[
  |u|^{p-1}u = c\sum_{\substack{\l_2\leq\ldots\leq\l_5\\\l_i\in1.01^\Z}} \int_{[0,1]^4}
    \bigl|\mathbf{u}_{\Ll_2}^{\ttau_2}\bigr|^{p-5}
    \mathbf{u}_{\Ll_2}^{\ttau_2}
    \mathbf{u}_{\Ll_3,\l_2}^{\ttau_3}
    \mathbf{u}_{\Ll_4,\l_3}^{\ttau_4}
    \mathbf{u}_{\Ll_5,\l_4}^{\ttau_5}
    u_{\l_5} \d{\ttau_2},
\]
  where $c=p(p-1)(p-2)(p-3)(p-4)$.
  
  Let $w$ be a solution to \eqref{eq:eqn_contr}. By \lemref{lem:airy_est_XT}
  it suffices to show
\[
   \sup_{\mu\in1.01^\Z}\mu^{s_p}
   \left\| \int_{-\infty}^t e^{-(t-s)\p_x^3}
     \bigl( \p_x\bigl( |v+W|^{p-1}(v+W) \bigr) \bigr)_\mu(s)ds
   \right\|_{\Ukdv^2} \leq \frac{\alpha}{\kappa_1}.
\]
  By duality (cf.\ \corref{cor:duality_u_v}), it suffices to show that for
  each $\mu\in1.01^\Z$ we have
\[
   \frac{\mu^{s_p}}{\|u_\mu\|_{\Vkdv^2}}
   \left| \int \p_x\bigl( |v+W|^{p-1}(v+W) \bigr) u_\mu \dxdt
   \right| \leq \frac{\alpha}{\kappa_1}.
\]
  If we apply the calculation above once, we can rewrite the modulus of
  the integral as
\begin{multline*}
   \mathbf{S_1}+\mathbf{S_2}:=
   \sum_{\l_5:\;1.1\l_5>\mu}
     \left|
       \int \p_x\bigl(\bigl|\mathbf{v}_{\Ll_5}^{\ttau_5} 
         +\mathbf{W}_{\Ll_5}^{\ttau_5}\bigr|^{p-1}(v+W)_{\l_5}\bigr)u_\mu
         \dxdt\d{\ttau_5}
     \right|\\
   +\sum_{\l_5:\;1.1\l_5\leq\mu}
     \left|
       \int \p_x\bigl(\bigl|\mathbf{v}_{\Ll_5}^{\ttau_5} 
         +\mathbf{W}_{\Ll_5}^{\ttau_5}\bigr|^{p-1}(v+W)_{\l_5}\bigr)u_\mu
         \dxdt\d{\ttau_5}
     \right|.
\end{multline*}
  First, we consider the sum $\mathbf{S_1}$. We integrate by parts,
  apply the calculation above to
  $\bigl|\mathbf{v}_{\Ll_5}^{\ttau_5}+\mathbf{W}_{\Ll_5}^{\ttau_5}\bigr|^{p-1}$
  and hence have to bound
\begin{multline*}
  \frac{\mu^{1+s_p}}{\|u_\mu\|_{\Vkdv^2}}
  \sum_{\substack{\l_2\leq\ldots\leq\l_5\\\l_5:\;1.1\l_5>\mu}}
  \biggl|\int
    \bigl|\mathbf{v}_{\Ll_2}^{\ttau_2}
       +\mathbf{W}_{\Ll_2}^{\ttau_2}\bigr|^{p-5}
    \bigl(\mathbf{v}_{\Ll_2}^{\ttau_2} +\mathbf{W}_{\Ll_2}^{\ttau_2}\bigr) \\
    \times
    \bigl(\mathbf{v}_{\Ll_3}^{\ttau_3} +\mathbf{W}_{\Ll_3}^{\ttau_3}\bigr)_{\l_2}
    \cdots
    \bigl(\mathbf{v}_{\Ll_5}^{\ttau_5} +\mathbf{W}_{\Ll_5}^{\ttau_5}\bigr)_{\l_4}
    (v+W)_{\l_5}
    \tfrac{\p_x}{\mu}u_\mu\dxdt\d{\ttau_2}
   \biggr|.
\end{multline*}
  Note that since the spaces $\Vkdv^2$ are based on $L^2$, the operator
  $\frac{\p_x}{\mu}$ is bounded.
  We expand the factor
  $\bigl(\mathbf{v}_{\Ll_5}^{\ttau_5} +\mathbf{W}_{\Ll_5}^{\ttau_5}\bigr)_{\l_4}$.
  For $\mathbf{v}_{\Ll_5,\l_4}^{\ttau_5}$ 
  we apply \lemref{lem:multlin_est} and keep this factor in $\L{6}{6}$.
  For $\mathbf{W}_{\Ll_5,\l_4}^{\ttau_5}$
  we apply \lemref{lem:multlin_est} and estimate all terms in $\XT$.
  Hence, after summing over the frequencies, we obtain that $\mathbf{S_1}$
  is less than
\begin{multline*}
  \int_{[0,1]^4}
  r\bigl\|\mathbf{v}_{\Ll_2}^{\ttau_2} +\mathbf{W}_{\Ll_2}^{\ttau_2}\bigr\|_{\XT}^{p-4}
  \bigl\|\mathbf{v}_{\Ll_3}^{\ttau_3} +\mathbf{W}_{\Ll_3}^{\ttau_3}\bigr\|_{\XT}
  \bigl\|\mathbf{v}_{\Ll_4}^{\ttau_4} +\mathbf{W}_{\Ll_4}^{\ttau_4}\bigr\|_{\XT}
  \|v+W\|_{\XT}\\
  \times\left(
   \sup_{\l\in1.01^\Z}\l^{\frac16+s_p}
     \bigl\|\mathbf{v}_{\Ll_5,\l}^{\ttau_5}\bigr\|_{L^6([0,T],\R)}
   +\|\mathbf{W}_{\Ll_5}^{\ttau_5}\|_{\XT}
  \right)\d{\ttau_2}.
\end{multline*}
  By the properties of $\Ff_\Ll^\ttau(\cdot)$, we may estimate
  this by
\[
   cr\|v+W\|_{\XT}^{p-1}\biggl(
    \sup_{\l\in1.01^Z}\l^{\frac16+s_p}\|v_\l\|_{L^6([0,T],\R)}+\|W\|_{\XT}
   \biggr).
\]
  Using the bounds given in \thmref{thm:lwp} and $\alpha\leq\kappa_1r_0$ we
  may estimate this by
\[
   cr(r_0\kappa_1)^{p-1}(\delta_0+\alpha)
\]
  for some $c>0$.
  Since $\delta_0\leq\alpha$ and $\alpha\leq\frac{1}{2cr\kappa_1^pr_0^{p-1}}$
  we obtain
\[
   cr(r_0\kappa_1)^{p-1}(\delta_0+\alpha) \leq \frac{\alpha}{\kappa_1},
\]
  which implies the desired estimate $\|w\|_{\XT}\leq\alpha$ for
  $\mathbf{S_1}$.

  Now, we consider $\mathbf{S_2}$. Note that $\mathbf{S_2}=0$ if $p=5$,
  since the frequencies do not sum up to zero.
  Hence we may assume $p>5$ in the following.
  In order to estimate $\mathbf{S_2}$, we decompose
\begin{multline*}
  \bigl|\mathbf{v}_{\Ll_5}^{\ttau_5}
    +\mathbf{W}_{\Ll_5}^{\ttau_5}\bigr|^{p-1}(v+W)_{\l_5}\\
  =\sum_{\l_3\leq\l_4\leq\l_5}\int
    \bigl|\mathbf{v}_{\Ll_3}^{\ttau_3}+\mathbf{W}_{\Ll_3}^{\ttau_3}\bigr|^{p-3}
    \bigl(\mathbf{v}_{\Ll_4}^{\ttau_4}+\mathbf{W}_{\Ll_4}^{\ttau_4}\bigr)_{\l_3}
    \bigl(\mathbf{v}_{\Ll_5}^{\ttau_5}+\mathbf{W}_{\Ll_5}^{\ttau_5}\bigr)_{\l_4}
    (v+W)_{\l_5} \d{\ttau_3}.
\end{multline*}
  Differentiating this term with respect to $x$ yields
\begin{multline*}
  c\l_3\bigl|\mathbf{v}_{\Ll_3}^{\ttau_3}+\mathbf{W}_{\Ll_3}^{\ttau_3}\bigr|^{p-5}
  \bigl(\mathbf{v}_{\Ll_3}^{\ttau_3}+\mathbf{W}_{\Ll_3}^{\ttau_3}\bigr)
  \tfrac{\p_x}{\l_3}\bigl(\mathbf{v}_{\Ll_3}^{\ttau_3}+\mathbf{W}_{\Ll_3}^{\ttau_3}\bigr)
  \bigl(\mathbf{v}_{\Ll_4}^{\ttau_4}+\mathbf{W}_{\Ll_4}^{\ttau_4}\bigr)_{\l_3}
  \bigl(\mathbf{v}_{\Ll_5}^{\ttau_5}+\mathbf{W}_{\Ll_5}^{\ttau_5}\bigr)_{\l_4}
  (v+W)_{\l_5}\\
  \begin{aligned}
    &+\l_3\bigl|\mathbf{v}_{\Ll_3}^{\ttau_3}+\mathbf{W}_{\Ll_3}^{\ttau_3}\bigr|^{p-3}
    \tfrac{\p_x}{\l_3}
    \bigl(\mathbf{v}_{\Ll_4}^{\ttau_4}+\mathbf{W}_{\Ll_4}^{\ttau_4}\bigr)_{\l_3}
    \bigl(\mathbf{v}_{\Ll_5}^{\ttau_5}+\mathbf{W}_{\Ll_5}^{\ttau_5}\bigr)_{\l_4}
    (v+W)_{\l_5}\\
    &+\l_4\bigl|\mathbf{v}_{\Ll_3}^{\ttau_3}+\mathbf{W}_{\Ll_3}^{\ttau_3}\bigr|^{p-3}
    \bigl(\mathbf{v}_{\Ll_4}^{\ttau_4}+\mathbf{W}_{\Ll_4}^{\ttau_4}\bigr)_{\l_3}
    \tfrac{\p_x}{\l_3}
    \bigl(\mathbf{v}_{\Ll_5}^{\ttau_5}+\mathbf{W}_{\Ll_5}^{\ttau_5}\bigr)_{\l_4}
    (v+W)_{\l_5}\\
    &+\l_5\bigl|\mathbf{v}_{\Ll_3}^{\ttau_3}+\mathbf{W}_{\Ll_3}^{\ttau_3}\bigr|^{p-3}
    \bigl(\mathbf{v}_{\Ll_4}^{\ttau_4}+\mathbf{W}_{\Ll_4}^{\ttau_4}\bigr)_{\l_3}
    \bigl(\mathbf{v}_{\Ll_5}^{\ttau_5}+\mathbf{W}_{\Ll_5}^{\ttau_5}\bigr)_{\l_4}
    \tfrac{\p_x}{\l_5}(v+W)_{\l_5}.
  \end{aligned}
\end{multline*}
  We estimate all these terms exactly as for $\mathbf{S_1}$, but using
  \lemref{lem:multlin_est2} instead of \lemref{lem:multlin_est}. Note
  that the additional factor $\l_i$ on each term ensures that the
  summation over the frequencies converges. Note also that the operator
  $\frac{\p_x}{\l_i}$ does not play a role, since the $\Vkdv^2$ spaces
  are based on $L^2$. By the same argument as before, we can bound
  $\mathbf{S_2}$ by $\frac{\alpha}{\kappa_1}$ as before.
\end{proof}

\begin{proof}[Proof of \thmref{thm:lwp}]
  In order to prove \thmref{thm:lwp}, we use a fixed point argument
  to show existence and uniqueness. Let
\[
   \alpha\leq\min\left\{\kappa_1r_0,\frac{1}{2cr\kappa_1^pr_0^{p-1}}\right\},
   \quad
   \delta_0\leq\alpha
   \quad\text{and}\quad
   \|w_0\|_{\XT}<\alpha.
\]
  Furthermore, let
  \begin{gather*}
    \left\{
    \begin{array}{rcl}
    \p_t w_1+\p_{xxx}w_1+\p_x\bigl(|v+w_0|^{p-1}(v+w_0)\bigr) & = & 0, \\
    w_1(0,x) & = & 0,
    \end{array}
    \right.
    \label{eqn:wp_pf_iteration1_real} \\
	\intertext{and}
    \left\{
    \begin{array}{rcl}
    \p_t w_2+\p_{xxx}w_2+\p_x\bigl(|v+w_1|^{p-1}(v+w_1)\bigr) & = & 0, \\
    w_2(0,x) & = & 0,
    \end{array}
    \right.
    \label{eqn:wp_pf_iteration2_real}
  \end{gather*}
  be two iteration steps. Note that \lemref{lem:contract} ensures that
  $\|w_1\|_{\XT}<\alpha$ as well.
  We have to show that there exists $q\in(0,1)$ such that
\[
  \|w_2-w_1\|_{\XT} \leq q \|w_1-w_0\|_{\XT}.
\]
  By \lemref{lem:airy_est_XT} and duality, it suffices to replace the left hand side by
\[
   \kappa_1\frac{\mu^{s_p}}{\|u_\mu\|_{\Vkdv^2}}
   \biggl| \int \p_x\bigl(|v+w_0|^{p-1}(v+w_0)
   -|v+w_1|^{p-1}(v+w_1)\bigr)u_\mu\dxdt
   \biggr|.
\]
  For brevity we define $\omega_0=v+w_0$ and $\omega_1=v+w_1$.
  Similar as in the proof of \lemref{lem:contract} we may write
\[
  |\omega_0|^{p-1}\omega_0 - |\omega_1|^{p-1}\omega_1
  =\sum_{\l_5\in1.01^\Z}\int
   \bigl|\mathbf{\oo_0}_{\Ll_5}^{\ttau_5}\bigr|^{p-1}\omega_{0,\l_5}
  -\bigl|\mathbf{\oo_1}_{\Ll_5}^{\ttau_5}\bigr|^{p-1}\omega_{1,\l_5}\d{\ttau_5}.
\]
  Again, we split the sum into two parts, such that
\begin{align*}
  \mathbf{S_1}+\mathbf{S_2}=\sum_{\l_5:\;1.1\l_5>\mu}
  \left|\int\p_x\bigl(
  \bigl|\mathbf{\oo_0}_{\Ll_5}^{\ttau_5}\bigr|^{p-1}\omega_{0,\l_5}
  -\bigl|\mathbf{\oo_1}_{\Ll_5}^{\ttau_5}\bigr|^{p-1}\omega_{1,\l_5}
  \bigr)u_\mu\dxdt\d{\ttau_5}\right|\\
  +\sum_{\l_5:\;1.1\l_5\leq\mu}
  \left|\int\p_x\bigl(
  \bigl|\mathbf{\oo_0}_{\Ll_5}^{\ttau_5}\bigr|^{p-1}\omega_{0,\l_5}
  -\bigl|\mathbf{\oo_1}_{\Ll_5}^{\ttau_5}\bigr|^{p-1}\omega_{1,\l_5}
  \bigr)u_\mu\dxdt\d{\ttau_5}\right|.
\end{align*}
  First, we consider $\mathbf{S_1}$. Similar to the previous Lemma, we
  integrate by parts such that the derivative turns into a factor $\mu$,
  and we decompose
\[
  \int_{[0,1]}
  \bigl|\mathbf{\oo_0}_{\Ll_5}^{\ttau_5}\bigr|^{p-1}\omega_{0,\l_5}
  -\bigl|\mathbf{\oo_1}_{\Ll_5}^{\ttau_5}\bigr|^{p-1}\omega_{1,\l_5}
  \d{\ttau_5}
\]
  to
\begin{multline*}
   \sum_{\l_2\leq\ldots\leq\l_5}\int_{[0,1]^4}
   \bigl|\mathbf{\oo_0}_{\Ll_2}^{\ttau_2}\bigr|^{p-5}
   \mathbf{\oo_0}_{\Ll_2}^{\ttau_2}
   \mathbf{\oo_0}_{\Ll_3,\l_2}^{\ttau_3}
   \mathbf{\oo_0}_{\Ll_4,\l_3}^{\ttau_4}
   \mathbf{\oo_0}_{\Ll_5,\l_4}^{\ttau_5}
   \omega_{0,\l_5}
    \\
   -\bigl|\mathbf{\oo_1}_{\Ll_2,}^{\ttau_2}\bigr|^{p-5}
   \mathbf{\oo_1}_{\Ll_2}^{\ttau_2}
   \mathbf{\oo_1}_{\Ll_3,\l_2}^{\ttau_3}
   \mathbf{\oo_1}_{\Ll_4,\l_3}^{\ttau_4}
   \mathbf{\oo_1}_{\Ll_5,\l_4}^{\ttau_5}
   \omega_{1,\l_5}
   \d{\ttau_2}.
\end{multline*}
  The integrand may be written as
\begin{multline*}
   \bigl|\mathbf{\oo_0}_{\Ll_2}^{\ttau_2}\bigr|^{p-5}
   \mathbf{\oo_0}_{\Ll_2}^{\ttau_2}
   \mathbf{\oo_0}_{\Ll_3,\l_2}^{\ttau_3}
   \mathbf{\oo_0}_{\Ll_4,\l_3}^{\ttau_4}
   \mathbf{\oo_0}_{\Ll_5,\l_4}^{\ttau_5}
   (\omega_{0,\l_5}-\omega_{1,\l_5}) \\
  \begin{aligned}
   &\quad+\bigl|\mathbf{\oo_0}_{\Ll_2}^{\ttau_2}\bigr|^{p-5}
   \mathbf{\oo_0}_{\Ll_2}^{\ttau_2}
   \mathbf{\oo_0}_{\Ll_3,\l_2}^{\ttau_3}
   \mathbf{\oo_0}_{\Ll_4,\l_3}^{\ttau_4}
   \bigl(\mathbf{\oo_0}_{\Ll_5,\l_4}^{\ttau_5}
     -\mathbf{\oo_1}_{\Ll_5,\l_4}^{\ttau_5}\bigr)
   \omega_{1,\l_5} \\
   &\quad+\ldots \\
   &\quad+\bigl(\bigl|\mathbf{\oo_0}_{\Ll_2}^{\ttau_2}\bigr|^{p-5}
     \mathbf{\oo_0}_{\Ll_2}^{\ttau_2}
    -\bigl|\mathbf{\oo_1}_{\Ll_2}^{\ttau_2}\bigr|^{p-5}
     \mathbf{\oo_1}_{\Ll_2}^{\ttau_2}\bigr)
   \mathbf{\oo_1}_{\Ll_3,\l_2}^{\ttau_3}
   \mathbf{\oo_1}_{\Ll_4,\l_3}^{\ttau_4}
   \mathbf{\oo_1}_{\Ll_5,\l_4}^{\ttau_5}
   \omega_{1,\l_5} \\
   &=: S_1 + \ldots + S_5
  \end{aligned}
\end{multline*}
  Using the fundamental theorem of calculus, we can further manipulate the
  last term $S_5$ to get
\begin{multline*}
   S_5=c\int_0^1 \bigl|
     \mathbf{\oo_0}_{\Ll_2}^{\ttau_2} + \tau
     \bigl(\mathbf{\oo_1}_{\Ll_2}^{\ttau_2}
      -\mathbf{\oo_0}_{\Ll_2}^{\ttau_2}\bigr)
   \bigr|^{p-5}\d{\tau}
   \bigl(\mathbf{\oo_0}_{\Ll_2}^{\ttau_2}
     -\mathbf{\oo_1}_{\Ll_2}^{\ttau_2}\bigr)\\
   \times\mathbf{\oo_1}_{\Ll_3,\l_2}^{\ttau_3}
   \mathbf{\oo_1}_{\Ll_4,\l_3}^{\ttau_4}
   \mathbf{\oo_1}_{\Ll_5,\l_4}^{\ttau_5}
   \omega_{1,\l_5}.
\end{multline*}
  We split $S_1$ into two terms by expanding
  $\mathbf{\oo_0}_{\Ll_5,\l_4}^{\ttau_5}$:
\begin{multline*}
   S_1=\bigl|\mathbf{\oo_0}_{\Ll_2}^{\ttau_2}\bigr|^{p-5}
   \mathbf{\oo_0}_{\Ll_2}^{\ttau_2}
   \mathbf{\oo_0}_{\Ll_3,\l_2}^{\ttau_3}
   \mathbf{\oo_0}_{\Ll_4,\l_3}^{\ttau_4}
   \mathbf{v}_{\Ll_5,\l_4}^{\ttau_5}
   (w_{0,\l_5}-w_{1,\l_5}) \\
   +\bigl|\mathbf{\oo_0}_{\Ll_2}^{\ttau_2}\bigr|^{p-5}
   \mathbf{\oo_0}_{\Ll_2}^{\ttau_2}
   \mathbf{\oo_0}_{\Ll_3,\l_2}^{\ttau_3}
   \mathbf{\oo_0}_{\Ll_4,\l_3}^{\ttau_4}
   \mathbf{w_0}_{\Ll_5,\l_4}^{\ttau_5}
   (w_{0,\l_5}-w_{1,\l_5}).
\end{multline*}
  For the first term we estimate
  $\mathbf{v}_{\Ll_5,\l_4}^{\ttau_5}$ in $\L{6}{6}$, and for the second
  term we estimate all factors in $\XT$. Hence, by \lemref{lem:multlin_est}
\[
   \frac{\mu^{1+s_p}}{\|u_\mu\|_{\Vkdv^2}}
    \sum_{\substack{\l_2\leq\ldots\leq\l_5\\\l_5:\;1.1\l_5\leq\mu}}
     \left| \int S_1\tfrac{\p_x}{\mu}u_\mu \dxdt\d{\ttau_2} \right|
     \ls
     r(r_0\kappa_1)^{p-2}(\delta_0+\alpha)\|w_0-w_1\|_{\XT}.
\]
  Analogously, we expand either $\mathbf{\oo_0}_{\Ll_4,\l_3}^{\ttau_4}$
  or $\mathbf{\oo_1}_{\Ll_5,\l_4}^{\ttau_5}$ in $S_2,\ldots,S_5$.
  For each $S_i$, $i=2,\ldots,5$, either the expanded term depends on $v$,
  then we choose to estimate this factor in $\L{6}{6}$, or we estimate all
  factors in $\XT$. Thus, by \lemref{lem:multlin_est} we estimate for
  $i=2,\ldots,5$
\[
  \frac{\mu^{1+s_p}}{\|u_\mu\|_{\Vkdv^2}}
    \sum_{\substack{\l_2\leq\ldots\leq\l_5\\\l_5:\;1.1\l_5\leq\mu}}
    \left| \int S_i\tfrac{\p_x}{\mu}u_\mu \dxdt\d{\ttau_2} \right|
   \ls
   r(r_0\kappa_1)^{p-2}(\delta_0+\alpha)\|w_0-w_1\|_{\XT}.
\]
  All in all, we obtain
\[
  \kappa_1
    \frac{\mu^{s_p}}{\|u_\mu\|_{\Vkdv^2}} \mathbf{S_1}
  \leq cr\kappa_1(r_0\kappa_1)^{p-2}(\delta_0+\alpha)\|w_0-w_1\|_{\XT}.
\]
  Now, we may choose $\alpha$ (and hence $\delta_0$) small enough such that
\[
   cr\kappa_1(r_0\kappa_1)^{p-2}(\delta_0+\alpha) < \frac12,
\]
 which gives the desired estimate for $\mathbf{S_1}$.

 Now, we consider
\[
  \mathbf{S_2}=\sum_{\l_5:\;1.1\l_5\leq\mu}
  \left|\int\p_x\bigl(
  \bigl|\mathbf{\oo_0}_{\Ll_5}^{\ttau_5}\bigr|^{p-1}\omega_{0,\l_5}
  -\bigl|\mathbf{\oo_1}_{\Ll_5}^{\ttau_5}\bigr|^{p-1}\omega_{1,\l_5}
  \bigr)u_\mu\dxdt\d{\ttau_5}\right|.
\]
  Note that if $p=5$ then $\mathbf{S_2}=0$ by the same argument as in
  \lemref{lem:contract}. Hence, we may assume $p>5$.
  We decompose 
\begin{multline*}
  \int_{[0,1]}\bigl|\mathbf{\oo_0}_{\Ll_5}^{\ttau_5}\bigr|^{p-1}\omega_{0,\l_5}
  -\bigl|\mathbf{\oo_1}_{\Ll_5}^{\ttau_5}\bigr|^{p-1}\omega_{1,\l_5} \d{\ttau_5}\\
  \begin{aligned}
  &=\sum_{\l_3\leq\l_4\leq\l_5}\int
    \bigl|\mathbf{\oo_0}_{\Ll_3}^{\ttau_3}\bigr|^{p-3}
    \mathbf{\oo_0}_{\Ll_4,\l_3}^{\ttau_4} \mathbf{\oo_0}_{\Ll_5,\l_4}^{\ttau_5}
    \omega_{0,\l_5}
   -\bigl|\mathbf{\oo_1}_{\Ll_3}^{\ttau_3}\bigr|^{p-3}
    \mathbf{\oo_1}_{\Ll_4,\l_3}^{\ttau_4} \mathbf{\oo_1}_{\Ll_5,\l_4}^{\ttau_5}
    \omega_{1,\l_5} \d{\ttau_3}\\
  &=\sum_{\l_3\leq\l_4\leq\l_5}\int
    \bigl|\mathbf{\oo_0}_{\Ll_3}^{\ttau_3}\bigr|^{p-3}
    \mathbf{\oo_0}_{\Ll_4,\l_3}^{\ttau_4} \mathbf{\oo_0}_{\Ll_5,\l_4}^{\ttau_5}
    (\omega_{0,\l_5}-\omega_{1,\l_5})\\
   &\qquad+\ldots\\
   &\qquad+\bigl(\bigl|\mathbf{\oo_0}_{\Ll_3}^{\ttau_3}\bigr|^{p-3}
           -\bigl|\mathbf{\oo_1}_{\Ll_3}^{\ttau_3}\bigr|^{p-3}\bigr)
    \mathbf{\oo_1}_{\Ll_4,\l_3}^{\ttau_4} \mathbf{\oo_1}_{\Ll_5,\l_4}^{\ttau_5}
    \omega_{1,\l_5} \d{\ttau_3}\\
   &=:\sum_{\l_3\leq\l_4\leq\l_5}\int S_1+\ldots+S_4 \d{\ttau_3}
  \end{aligned}
\end{multline*}
  If we differentiate $S_1$ with respect to $x$, then we are able
  to apply \lemref{lem:multlin_est2} and since we obtain an additional factor
  $\l_i$, we get
\begin{multline*}
  \kappa_1\frac{\mu^{s_p}}{\|u_\mu\|_{\Vkdv^2}}
  \sum_{\substack{\l_3\leq\l_4\leq\l_4\\\l_5:\;1.1\l_5\leq\mu}}
  \left|\int S_1u_\mu \dxdt \d{\ttau_3} \right|\\
  \begin{aligned}
   &\leq cr\bigl\|\mathbf{\oo_0}_{\Ll_3}^{\ttau_3}\bigr\|_{\XT}^{p-3}
   \bigl\|\mathbf{\oo_0}_{\Ll_4}^{\ttau_4}\bigr\|_{\XT}
   \biggl(\sup_{\l\in1.01^\Z}\l^{\frac16+s_p}\bigl\|
      \mathbf{v}_{\Ll_5,\l}^{\ttau_5}\bigr\|_{L([0,T],\R)}
    +\bigl\|\mathbf{w_0}_{\Ll_5}^{\ttau_5}\bigr\|_{\XT}\biggr)\\
   &\qquad\times\|\omega_{0}-\omega_{1}\|_{\XT}\\
   &\leq cr\kappa_1(r_0\kappa_1)^{p-2}(\delta_0+\alpha)
    \|w_0-w_1\|_{\XT}.
  \end{aligned}
\end{multline*}
  We can treat $S_2$ and $S_3$ analogously. Now, we consider $S_4$.
  Applying the fundamental theorem of calculus, we obtain
  for $\OO(\tau)=\mathbf{\oo_0}_{\Ll_3}^{\ttau_3}
       +\tau\bigl(\mathbf{\oo_1}_{\Ll_3}^{\ttau_3}
        -\mathbf{\oo_0}_{\Ll_3}^{\ttau_3}\bigr)$:
\[
  S_4= c\int_0^1 |\OO(\tau)|^{p-5}\OO(\tau) \d{\tau}
     \bigl(\mathbf{\oo_0}_{\Ll_3}^{\ttau_3}-\mathbf{\oo_1}_{\Ll_3}^{\ttau_3}\bigr)
     \mathbf{\oo_1}_{\Ll_4,\l_3}^{\ttau_4} \mathbf{\oo_1}_{\Ll_5,\l_4}^{\ttau_5}
     \omega_{1,\l_5}.
\]
  Differentiating this term with respect to $x$ yields a sum of $5$ terms,
  each of which can be estimated using \lemref{lem:multlin_est2} as above.
  All in all we obtain
\[
  \kappa_1
    \frac{\mu^{s_p}}{\|u_\mu\|_{\Vkdv^2}} \mathbf{S_2}
  \leq cr\kappa_1(r_0\kappa_1)^{p-2}(\delta_0+\alpha)
  \|w_0-w_1\|_{\XT}.
\]
  By possibly chooser $\alpha$ smaller again, we have
\[
   cr\kappa_1(r_0\kappa_1)^{p-2}(\delta_0+\alpha) < \frac12.
\]

  Thus, for small enough $\alpha$, we have a contraction and Banach
  fixed-point theorem gives existence and uniqueness.
\end{proof}

The following proof of \corref{cor:gwp} is an observation of Koch and Marzuola in
\cite[p. 175-176]{KM12}.
\begin{proof}[Proof of \corref{cor:gwp}]
  By Strichartz' estimates for linear KdV and \lemref{lem:airy_est_XT}
  we have for $v$ given as in \eqref{eq:i_airy} and $0 < T \leq \infty$ that
\[
   \sup_{\l\in1.01^\Z} \l^{\frac16+s_p} \|v_\l\|_{L^6([0,T],L^6(\R))} 
   \leq \kappa_0\|v\|_{\XT}
   \leq \kappa_0\kappa_1\|\psi_0\|_{\B}
   \leq \delta_0(1).
\]
  Since this estimate holds true for all $0 < T \leq \infty$, we may apply
  \thmref{thm:lwp} with $T=\infty$ to obtain global existence.
\end{proof}

\nocite{BL76}
\providecommand{\bysame}{\leavevmode\hbox to3em{\hrulefill}\thinspace}
\providecommand{\MR}{\relax\ifhmode\unskip\space\fi MR }
% \MRhref is called by the amsart/book/proc definition of \MR.
\providecommand{\MRhref}[2]{%
  \href{http://www.ams.org/mathscinet-getitem?mr=#1}{#2}
}
\providecommand{\href}[2]{#2}

\bibliographystyle{amsplain}\label{sect:refs}

\end{document}